\title[Global Bifurcation of Interfacial Waves]{Global Bifurcation Theory for Periodic Traveling Interfacial Gravity-Capillary Waves}
\author{David M. Ambrose} 
\address{Department of Mathematics, Drexel University, Philadelphia, PA 19104}
\thanks{DMA gratefully acknowledges support from the National Science Foundation through
grant DMS-1016267.}
\author{Walter A. Strauss} 
\address{Department of Mathematics, Brown University, Providence, RI 02912}
\thanks{WAS gratefully acknowledges support from the National Science Foundation through grant
DMS-1007960.}
\author{J. Douglas Wright}
\address{Department of Mathematics, Drexel University, Philadelphia, PA 19104}
\thanks{JDW acknowledges gratefully  support from the National Science Foundation through
grant DMS-1105635.}
\newtheorem{theorem}{Theorem}
\newtheorem{proposition}{Proposition}
\newtheorem{cor}{Corollary}
\newtheorem{lemma}[theorem]{Lemma}
\newtheorem{definition}{Definition}
\theoremstyle{remark}
\newtheorem{remark}{Remark}
\newcommand{\ep}{\epsilon}
\newcommand{\sgn}{{\textrm{ sgn}}}
\newcommand{\be}{\begin{equation}}
\newcommand{\ee}{\end{equation}}
\newcommand{\bes}{\begin{equation*}}
\newcommand{\ees}{\end{equation*}}
\newcommand{\R}{{\bf{R}}}
\newcommand{\C}{{\bf{C}}}
\newcommand{\Z}{{\bf{Z}}}
\newcommand{\N}{{\bf{N}}}
\newcommand{\ds}{\displaystyle}
\newcommand{\LV}{\left\vert}
\newcommand{\RV}{\right\vert}
\renewcommand{\H}{{\mathcal{H}}}
\newcommand{\U}{{\mathcal{U}}}
\newcommand{\CA}{{\mathcal{C}}}
\newcommand{\per}{{\text{per}}}
\newcommand{\thetab}{\breve{\theta}}
\newcommand{\gammab}{\breve{\gamma}}
\newcommand{\paa }{\partial_\alpha^{-2} }
\begin{document}

\begin{abstract}
We consider the global bifurcation problem for spatially periodic traveling waves for two-dimensional gravity-capillary vortex sheets. 
The two fluids have arbitrary constant, non-negative densities (not both zero),
the gravity parameter can be positive, negative, or zero, and the surface tension
parameter is positive.  Thus, included in the parameter set are
the cases of pure capillary water waves and gravity-capillary water waves.
Our choice of coordinates  allows for the possibility that the fluid interface is not a graph over the horizontal. 
We use a technical reformulation 
which converts the traveling wave equations into a system of the form ``identity plus compact." Rabinowitz' 
global bifurcation theorem
is applied and the final conclusion is the existence of either a closed loop of solutions, or
an unbounded set of  nontrivial traveling wave solutions 
which contains waves which may move arbitrarily fast, become arbitrarily long,
form  singularities in 
the vorticity or curvature, or whose interfaces self-intersect.
\end{abstract}
 
\maketitle

\section{Introduction}

We consider the case of two two-dimensional 
fluids, of infinite vertical extent and periodic in the horizontal direction (of period $M > 0$) and separated by an interface which is free to move.  Each fluid has a constant, non-negative density: $\rho_2\ge 0$ in the upper fluid and $\rho_1\ge 0$ in the lower.
Of course, we do not allow both densities to be zero, but if one of the densities is zero, then it is known as the water wave case.  The velocity of each fluid satisfies the
incompressible, irrotational Euler equations.  The restoring forces in the problem include  non-zero surface tension (with surface tension constant $\tau > 0$)
on the interface
and a  gravitational body force (with acceleration $g \in \R$, possibly zero)
which acts in the vertical direction.
Since the fluids are irrotational, the interface is
a vortex sheet, meaning that the vorticity in the problem is an amplitude times a Dirac mass  supported on the interface.
We call this problem ``the two-dimensional gravity-capillary vortex sheet problem."  
The average vortex strength on the interface is denoted by $\overline\gamma$.  

In \cite{AAW}, two of the authors and Akers established a new formulation for the traveling wave problem for parameterized
curves, and applied it to the vortex sheet with surface tension (in case the two fluids have the same density).  
The curves in \cite{AAW} may have multi-valued height.  This is significant
since it is known that there exist traveling waves in the presence of surface tension
which do indeed have multi-valued height; the most famous such waves are the Crapper
waves \cite{Crapper}, and there are other, related waves known \cite{kinnersley},
\cite{AAW2}, \cite{deBoeck2}.
The results of \cite{AAW} were both analytical and 
computational; the analytical conclusion was a local bifurcation theorem, demonstrating that there exist traveling vortex sheets with surface tension
nearby to equilibrium.  In the present work, we establish a {\it global} bifurcation theorem for the problem with general densities.  We now state a somewhat informal version of this theorem:

\begin{theorem} \label{main result}  
{\bf (Main Theorem)} 
For all choices of the constants $\tau > 0$, $M > 0$, $\overline\gamma\in\R$, 
$\rho_1,\rho_2 \ge 0$ (not both zero) and  $g \in \R$,  
there exist a countable number of connected sets of smooth\footnote 
{Here and below, when we say a function is ``smooth" we mean that its derivatives of all orders exist.} 
non-trivial symmetric periodic traveling wave solutions, bifurcating from a quiescent equilibrium,   
for  the two-dimensional gravity-capillary vortex sheet problem.
If $\bar{\gamma}\neq 0$ or $\rho_{1}\neq\rho_{2},$ then each of these
connected sets has at least one of the following properties:
\begin{enumerate}[(a)]
\item it contains waves whose interfaces have lengths per period which are arbitrarily long;
\item it contains waves whose interfaces have arbitrarily large curvature; 
\item it contains waves where the jump of the tangential component  of the fluid velocity  
 across the interface 
 or its derivative is arbitrarily large;
\item its closure contains a wave whose interface has a point of self intersection;
\item it contains a sequence of waves whose interfaces converge to a flat configuration but whose speeds contain at least two convergent subsequences whose limits differ.
\end{enumerate}
In the case that $\bar{\gamma}=0$ and $\rho_{1}=\rho_{2},$ then each connected
set has at least one of the properties (a)-(f), where (f) is the following:
\begin{enumerate}[(f)]
\item it contains waves which have speeds which are arbitrarily large.
\end{enumerate}

\end{theorem}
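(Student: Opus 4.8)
The plan is to cast the traveling-wave problem as an operator equation of the form $\mathbf{w} = \lambda \mathcal{K}(\lambda, \mathbf{w})$ on a suitable Banach space of periodic functions, where $\lambda$ encodes the bifurcation parameter (wave speed, or a rescaling thereof) and $\mathcal{K}$ is compact, so that Rabinowitz's global bifurcation theorem applies. The first step is to recall the reformulation from \cite{AAW}: one parameterizes the interface by a variable $\alpha$, works with the tangent angle $\theta(\alpha)$ and the (normalized) vortex strength $\gamma(\alpha)$, and uses the normalized arclength parameterization so that the arclength element is a constant depending on the total length $L$. Writing the traveling-wave equations in these variables, the surface tension term supplies a term like $\tau \theta_{\alpha\alpha}$ which, after inversion of $\partial_\alpha^2$ on the space of zero-mean periodic functions (the operator $\paa$ already abbreviated in the preamble), becomes the ``identity'' part; everything else---the Birkhoff--Rott integral giving the velocity from the vortex sheet strength, the gravity term, the advection terms---must be shown to define a compact (indeed smoothing or at worst bounded-into-a-compactly-embedded-space) operator on the chosen Sobolev space $H^s_{\per}$. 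I would restrict to the subspace of functions with appropriate even/odd symmetry to get the ``symmetric'' waves and to kill the kernel coming from translation invariance.

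The second step is the linearized analysis at the trivial (flat, quiescent) solution: compute the Fréchet derivative $\mathcal{K}_{\mathbf{w}}(\lambda, 0)$, diagonalize it in Fourier modes, and identify the characteristic values $\lambda_k$ at which $I - \lambda \mathcal{K}_{\mathbf{w}}(\lambda,0)$ has a nontrivial kernel. Because the linearization is a Fourier multiplier, the kernel at $\lambda_k$ is (generically) one-dimensional, spanned by a single cosine mode; this gives the odd algebraic multiplicity / simple-eigenvalue condition needed to start a global branch via Rabinowitz. This is where the hypotheses $\tau > 0$ and the dispersion relation enter: one needs the relevant multiplier to actually vanish for a discrete increasing sequence of real $\lambda_k \to \infty$ (hence ``a countable number'' of branches), and one must separate the case $\bar\gamma \ne 0$ or $\rho_1 \ne \rho_2$ from the case $\bar\gamma = 0, \rho_1 = \rho_2$, since in the latter the dispersion relation is even in the speed and the natural bifurcation parameter is the speed squared, changing the form of the unbounded alternative (conclusion (f) versus (e)).

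The third step is to invoke Rabinowitz's theorem to obtain, from each simple characteristic value $\lambda_k$, a connected set $\mathcal{C}_k$ of nontrivial solutions which is either unbounded in $\R \times H^s_{\per}$ or returns to the trivial branch at some other $\lambda_j$ (the ``closed loop'' alternative mentioned in the abstract). The final and most delicate step---the main obstacle---is to translate ``unbounded in $\R \times H^s_{\per}$'' into the concrete geometric/physical alternatives (a)--(f). One must show that if a sequence of solutions along $\mathcal{C}_k$ stays bounded in all the quantities named in (a)--(e) (length $L$, curvature $\sim \|\theta_\alpha\|$, velocity jump and its derivative $\sim \|\gamma\|$ in the appropriate norm, speed, and the interface stays embedded with a uniform lower bound on the distance between sheets), then a bootstrap/elliptic-regularity argument using the ``identity plus compact'' structure upgrades these bounds to an $H^s$ bound, and compactness then forces a convergent subsequence with a nontrivial limit, contradicting unboundedness unless the limiting configuration degenerates---which is exactly how self-intersection (d) and the speed-splitting degeneracy (e) or (f) enter. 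Handling the Birkhoff--Rott integral's singularity as the interface approaches self-intersection, and showing the chord-arc constant controls it, is the technical heart; I would adapt the a priori estimates of \cite{AAW} for this, being careful that all constants are uniform along the branch.
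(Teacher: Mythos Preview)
Your plan follows essentially the same route as the paper: the HLS tangent-angle/vortex-strength formulation, inversion of $\tau\partial_\alpha^2$ to obtain ``identity plus compact,'' restriction to the odd/even symmetry subspace, Fourier diagonalization of the linearization to locate simple bifurcation points $c_\pm(k)$, and then Rabinowitz (in the Kielh\"ofer form, on an open set cut out by a chord-arc and non-verticality condition) to produce the global branches. Your identification of outcome (e) with the ``returns to the trivial branch at another parameter value'' alternative and of (d) with failure of the chord-arc bound is also what the paper does.

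There is, however, a genuine gap in your final step. When the Rabinowitz alternative says the branch is unbounded in $\R \times H^1$, one possibility is that $|c_n| \to \infty$ while $\|\theta_n\|_{H^1}$ and $\|\gamma_n\|_{H^1}$ stay bounded. Your bootstrap/compactness argument does not touch this case: the sequence \emph{is} unbounded (because $c_n$ is), so there is no contradiction to extract from compactness of $(\theta_n,\gamma_n)$. You list ``speed'' among the quantities assumed bounded, but bounded speed is not one of (a)--(e); outcome (e) concerns two distinct subsequential speed limits at a flat limit, not a speed bound. Hence in the generic case $\bar\gamma\ne 0$ or $A\ne 0$ you must \emph{prove} that $|c_n|\to\infty$ with $(\theta_n,\gamma_n)$ bounded is impossible. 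The paper does this by a direct nonlinear argument on the traveling-wave equations: from the normal-velocity equation $\Re(W^*N)+c\sin\theta=0$ one gets $\|c_n\sin\theta_n\|_{H^1}$ bounded, hence $\theta_n\to 0$ uniformly; then dividing the once-integrated $\gamma$-equation by $c_n^2$ forces, when $A\ne 0$, that $\sigma_n\cos^2\theta_n\to 0$, a contradiction, and when $A=0$, that $\gamma_n\to 0$ in $L^2$, forcing $\bar\gamma=0$. Your suggestion that the (e)/(f) dichotomy comes from reparametrizing by $c^2$ when the dispersion relation is even does not supply this argument and would not by itself rule out $|c_n|\to\infty$.
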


We mention that in the case of pure gravity waves, it has sometimes been possible to 
rule out the possibility of an outcome like (e) above; one such paper, for example, is
\cite{constantinStrauss}.  The argument to eliminate such an outcome is typically
a maximum principle argument, and this type of argument appears to be unavailable in the
present setting because of the larger number of derivatives stemming from the presence of
surface tension.  In a forthcoming numerical work, computations will be presented which indicate
that in some cases, outcome (e) can in fact occur for gravity-capillary waves \cite{aapwPreprint}.

Following \cite{AAW}, we start from the formulation of the problem introduced by Hou, Lowengrub, and Shelley, which uses geometric dependent variables and a 
normalized arclength parameterization of the free surface \cite{HLS1}, \cite{HLS2}.  
This formulation follows from the observation that the tangential velocity can be
chosen arbitrarily, while only the normal velocity needs to be chosen in accordance with the physics of the problem.  The tangential velocity can then
be selected in a convenient fashion which  allows us to
 specialize the equations of motion to the periodic traveling wave case in a way that  does not require the interface to be a graph over the horizontal coordinate.
The resulting equations are nonlocal, nonlinear and involve the singular Birkhoff-Rott integral. Despite their complicated appearance, using several well-known properties of the Birkhoff-Rott integral
we are able to recast the traveling wave equations in the form of ``identity plus compact." Consequently, we are able to use an abstract version of the Rabinowitz global-bifurcation theory \cite{R} to prove our main result.  An interesting feature of our formulation is that, unlike similar formulations that allow for overturning waves
by using a conformal mapping, an extension of the present method to the case of 3D waves, 
using for instance ideas like those in \cite{ambroseMasmoudi2},  
seems entirely possible.

The main theorem allows for both positive and negative gravity;  
equivalently, we could say we allow a heavier fluid above or below a lighter fluid.
As remarked in \cite{AAW2}, this is an effect that relies strongly on the presence
of surface tension.  In the case of pure gravity waves, there are some
theorems in the literature demonstrating the nonexistence of traveling waves in
the case of negative gravity \cite{hur-nonlinearity}, \cite{toland-pseudo}.

A similar problem was treated by Amick and Turner \cite{amickTurner1}.  
As with the present paper they treat the global bifurcation of interfacial
waves between two fluids.  However, they require the non-stagnation
condition that the horizontal velocity of the fluid is less than the
wave speed ($u<c$).  Thus their global connected set stops once $u=c$ and
there cannot be any overturning waves.  Their paper has some other
less important differences as well, namely it treats solitary waves and the
top and bottom are fixed ($0<y<1$).  Their methodology is very different
from ours as well, since they handle the case of a smooth density first 
without using the Birkhoff-Rott formulation, and only later let the density
approach a step function.  Another paper \cite{amickTurner2} by the same authors
only treats small solutions. 

Global bifurcation with $\rho_2\equiv 0$, that is, in the water wave case,
has been studied by a variety of authors.  In particular, global
bifurcation that permits overturning waves in the case of constant
vorticity is treated in \cite{walterPreprint}.  Another recent paper is \cite{deBoeck},
in which a global bifurcation theorem is proved in the case $\rho_{2}\equiv0$ for capillary-gravity
waves on finite depth, also with constant vorticity.
Both of these works allow for multi-valued waves by means 
of a conformal mapping.
Walsh treats global bifurcation for capillary water waves with general non-constant vorticity in \cite{walsh},
with the requirement that the interface be a graph with respect to the horizontal coordinate.
The methodologies of all of these papers are completely different from the present work.

Our reformulation of the traveling wave problem into the form ``identity plus compact'' uses the presence
of surface tension in a fundamental way.  In particular, the surface tension enters the problem through the
curvature of the interface, and the curvature involves derivatives of the free surface.  By inverting these derivatives,
we gain the requisite compactness.  The paper \cite{matioc} uses a similar idea to gain compactness 
in order to prove
a global bifurcation theorem for capillary-gravity water waves with constant vorticity and single-valued height.

We mention that the current work finds examples of solutions for interfacial irrotational flow which exist for all time.
The relevant initial value problems are known to be well-posed at short times \cite{A}, but behavior at large times
is in general still an open question.  Some works on existence or nonexistence of singularities for these problems
are \cite{splash1}, \cite{splash2}, \cite{splash3}.  For small-amplitude, pure capillary water waves, global solutions
are known to exist in general \cite{globalCapillary}.

The plan of the paper is as follows: in Section 2, we describe the equations of motion for the relevant interfacial 
fluid flows.  In Section 3, we detail our traveling wave formulation which uses the arclength formulation and which
allows for waves with multi-valued height.  In Section 4, we explore the consequences of the assumption of spatial
periodicity for our traveling wave formulation.  In Section 5, we continue to work with the traveling wave formulation,
now reformulating into an equation of the form ``identity plus compact.''  This sets the stage for Section 6, in which
we state a more detailed version of our main theorem and provide the proof.

\section{The Equations of Motion}\label{eom}

If we make the canonical identification\footnote{Throughout this paper we make this identification for any vector in $\R^2$.} of ${\bf R}^2$ with the complex plane ${\bf C}$, we may represent the free surface at time $t$, denoted by $S (t)$, as the graph (with respect to the parameter $\alpha$) of
$$ 
z(\alpha,t) = x(\alpha,t) + i y(\alpha,t).
$$
The unit tangent and upward normal vectors  to $S $ are, respectively:
\begin{equation}\label{tangent def}
T = {z_\alpha\over |z_\alpha|}  \text{ and } N = i {z_\alpha\over |z_\alpha|}.
\end{equation}
(A derivative with respect to $\alpha$ is denoted either as a subscript or as $\partial_\alpha$.)  
Thus we have uniquely defined real valued functions $U(\alpha,t)$ and $V(\alpha,t)$ such that
\begin{equation}\label{z eqn}
z_t  = U N + V T
\end{equation}
for all $\alpha$ and $t$. We call $U$ the normal velocity of the interface and $V$ the tangential velocity.
The normal velocity $U$ is determined from fluid mechanical considerations and is given by:
\begin{equation}\label{this is U}
U= \Re(W^* N)
\end{equation}
where
\begin{equation}\label{B}
W^*(\alpha,t):=\frac{1}{2 \pi i }\mathrm{PV}\int_{\R} {\gamma(\alpha',t) \over z(\alpha,t) -z(\alpha',t)} d \alpha' 
\end{equation}
is commonly referred to as the Birkhoff-Rott integral. (We use ``$*$" to denote complex conjugation.)

The real-valued quantity $\gamma$ is called in \cite{HLS1} ``the unnormalized vortex sheet-strength," though in this document we
will primarily refer to it as simply the ``vortex sheet-strength."
It
can be used to recover the Eulerian fluid velocity (denoted by $u$) in the bulk at time $t$ and position $w \notin S (t)$  via
\be\label{velocity}
u(w,t):=\left[\frac{1}{2 \pi i }\int_{\R} {\gamma(\alpha',t) \over w -z(\alpha',t)} d \alpha' \right]^*.
\ee
The quantity $\gamma$ is also related to the jump in the tangential velocity of the fluid. 
Specifically, using the Plemelj formulas, one finds that:
$$
[[ u ]] := \lim_{ w \to z(\alpha,t)^+} u(w,t) - \lim_{ w \to z(\alpha,t)^-} u(w,t) = { \gamma(\alpha,t)\over z^*_\alpha(\alpha,t)}.
$$
In the above, the ``$+$" and ``$-$" modifying $z(\alpha,t)$ mean that the limit is taken from ``above" or ``below" $S (t)$, respectively.
If we let $j(\alpha,t):=\Re( [[u]]^* T)$ be the component of $[[u]]$ which is tangent to $S (t)$ at $z(\alpha,t)$, then
the preceding formula shows:
\be\label{jump}
\gamma(\alpha,t) = j(\alpha,t)\ |z_\alpha(\alpha,t)|,   
\ee
which is to say that $\gamma(\alpha,t)$ is a scaled version of the {\it jump in the tangential velocity} of the fluid 
across the interface.

As shown in \cite{A}, $\gamma$ evolves according to the equation 
\begin{multline}\label{gamma eqn v1}
\gamma_{t}=\tau\frac{\theta_{\alpha\alpha}}{|z_\alpha|}+\frac{((V-
\Re( W^* T)
)\gamma)_{\alpha}}{|z_\alpha|}\\
-2A\left(\frac{
\Re(W_t^* T)
}{|z_\alpha|}+\frac{1}{8}\frac{(\gamma^{2})_{\alpha}}{|z_\alpha|^{2}}
+gy_{\alpha}-(V-
\Re(W^* T)
)
\Re(W^*_\alpha T)
\right).
\end{multline}
Here $A$ is the {\it Atwood number},
$$A:=\frac{\rho_{1}-\rho_{2}}{\rho_{1}+\rho_{2}}.$$
Note that $A$ can be taken as any value in the interval $[-1,1].$
Lastly, $\theta(\alpha,t)$ is the {\it tangent angle} to $S (t)$ at the point $z(\alpha,t)$. Specifically it is defined by the relation
$$
z_\alpha = |z_\alpha| e^{i \theta}.
$$
Observe that we have the following nice representations of the tangent and normal vectors in terms of $\theta$:
\be\label{nice T}
T={e^{i \theta}} \text{ and } N={ie^{i \theta}}.
\ee

As observed above, the tangential velocity $V$ has no impact on the geometry of $S (t)$. As such, we are free to make $V$ anything we wish.
In this way, one sees that equations \eqref{z eqn} and \eqref{gamma eqn v1} form a closed dynamical system.
In \cite{HLS1}, the authors make use of the flexibility in the choice of $V$ to design an efficient and non-stiff numerical method for the solution of the dynamical system. In the article \cite{A}, 
 $V$ is selected in a way which is helpful in making {\it a priori} energy estimates, and in completing
 a proof of  local-in-time well-posedness of the initial value problem.
We leave $V$ arbitrary for now.

\section{Traveling waves}

We are interested in finding  traveling wave solutions, which is to say  solutions where both the interface and Eulerian fluid velocity 
propagate horizontally with no change in form and at constant speed.
To be precise:
\begin{definition} \label{TW def} We say $(z(\alpha,t),\gamma(\alpha,t))$ is a traveling wave solution of \eqref{z eqn} and \eqref{gamma eqn v1}
if there exists $c \in \R$ such that for all $t \in \R$ we have
\begin{equation}\label{interface translates}
S (t) = S (0) + ct
\end{equation}
and, for all $w \notin S (t)$,
\begin{equation}\label{velocity translates}
u(w,t) = u(w-ct,0)
\end{equation}
where $u$ is determined from $(z(\alpha,t),\gamma(\alpha,t))$ by way of  \eqref{velocity}.
\end{definition}

Later on the speed $c$ will serve as our bifurcation parameter.  
We have the following results concerning traveling wave solutions of \eqref{z eqn}~and~\eqref{gamma eqn v1}.
\begin{proposition}\label{TW eqn lemma}{\bf (Traveling wave ansatz)}
(i) Suppose that $(z(\alpha,t),\gamma(\alpha,t))$ solves \eqref{z eqn} and \eqref{gamma eqn v1}
and, moreover, there exists $c \in \R$ such that
\begin{equation}\label{TW v1}
{z}_t = c \quad \text{and} \quad
{\gamma}_t  =0
\end{equation}
holds for all $\alpha$ and $t$. Then $(z(\alpha,t),\gamma(\alpha,t))$ is a traveling wave solution with speed~$c$.

(ii) If $(\check{z},\check{\gamma})$ is a traveling wave solution with speed~$c$ of \eqref{z eqn} and \eqref{gamma eqn v1}
then there exists a reparameterization of $S (t)$ which maps $(\check{z},\check{\gamma}) \mapsto (z,\gamma)$ where 
$(z,\gamma)$ 
satisfies~\eqref{TW v1}.
\end{proposition}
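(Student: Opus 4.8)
The plan is to prove the two parts separately, with part (i) being essentially a direct verification and part (ii) requiring the construction of a suitable reparameterization. For part (i), I would start from the hypotheses $z_t = c$ and $\gamma_t = 0$ and check the two defining conditions in Definition \ref{TW def}. Condition \eqref{interface translates} is immediate: if $z_t = c$ for all $\alpha$ and $t$, then $z(\alpha,t) = z(\alpha,0) + ct$, so the point set $S(t) = \{z(\alpha,t) : \alpha \in \R\}$ is exactly $S(0) + ct$. For condition \eqref{velocity translates}, I would look at the formula \eqref{velocity} for $u(w,t)$: since $\gamma(\alpha,t) = \gamma(\alpha,0)$ (because $\gamma_t = 0$) and $z(\alpha',t) = z(\alpha',0) + ct$, a change of variables $w \mapsto w - ct$ inside the integral shows $u(w,t) = u(w-ct,0)$. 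So $(z,\gamma)$ is a traveling wave solution with speed $c$, and part (i) is done.

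For part (ii), suppose $(\check z, \check\gamma)$ is a traveling wave solution with speed $c$. By \eqref{interface translates}, $S(t) = S(0) + ct$, so the curve $\check z(\cdot, t) - ct$ parameterizes the \emph{fixed} curve $S(0)$ for every $t$; hence there is, for each $t$, a reparameterization $\phi(\cdot,t)$ of the real line with $\check z(\phi(\alpha,t), t) - ct$ independent of $t$. The idea is to define $z(\alpha,t) := \check z(\phi(\alpha,t),t)$ and to choose $\phi$ so that $z_t = c$ identically; then the new tangential velocity $V$ is whatever it must be to make this hold, which is legitimate since $V$ was left arbitrary. Concretely, I would fix the parameterization at $t=0$ (say $\phi(\alpha,0) = \alpha$), let $z_0(\alpha) := \check z(\alpha,0)$ be the initial curve, and \emph{define} $z(\alpha,t) := z_0(\alpha) + ct$; then by \eqref{interface translates} the point $z(\alpha,t)$ lies on $S(t)$, so there is a unique $\phi(\alpha,t)$ with $\check z(\phi(\alpha,t),t) = z(\alpha,t)$ (uniqueness and smoothness of $\phi$ follow from $\check z$ being an embedded smooth curve, so $\check z_\alpha \neq 0$, via the implicit function theorem). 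Setting $\gamma(\alpha,t) := \check\gamma(\phi(\alpha,t),t)$, one checks using \eqref{jump} that $\gamma$ transforms correctly under reparameterization (it scales with $|z_\alpha|$, being $|z_\alpha|$ times the geometric jump $j$, which is intrinsic), and one verifies that the transformed pair still solves \eqref{z eqn} and \eqref{gamma eqn v1} — the equation \eqref{z eqn} with the new, implicitly-defined $V$, and \eqref{gamma eqn v1} because reparameterization covariance of that system was part of its derivation in \cite{A}. By construction $z_t = c$. It then remains to see $\gamma_t = 0$: since $u(w,t) = u(w-ct,0)$ by \eqref{velocity translates} and $\gamma$ is recovered from $u$ and the interface geometry (via the Plemelj jump formula and \eqref{jump}), and since the geometry at parameter $\alpha$ and time $t$ is a pure translate by $ct$ of the geometry at time $0$, the jump $j$ and hence $\gamma$ is time-independent.

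I expect the main obstacle to be part (ii), specifically making rigorous the claim that one can choose the reparameterization globally and smoothly in $(\alpha,t)$ and that the resulting system is exactly \eqref{z eqn}–\eqref{gamma eqn v1} with some admissible $V$. The subtle points are: (1) confirming that \eqref{z eqn}–\eqref{gamma eqn v1} is genuinely covariant under $\alpha$-reparameterizations — this is where the freedom in $V$ is used, and where one must trace through how $U$, $\theta$, $\gamma$, and the Birkhoff–Rott integral $W^*$ transform (the normal velocity $U = \Re(W^* N)$ and the combination $V - \Re(W^* T)$ are the geometrically meaningful objects); and (2) checking that $\gamma$ as redefined is consistent, i.e. that forcing $z_t = c$ is compatible with $\gamma_t = 0$ rather than over-determining the system. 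The cleanest route is probably to avoid re-deriving covariance from scratch and instead argue at the level of the physical quantities: the interface point set and the Eulerian velocity field $u$ are the invariant data, Definition \ref{TW def} is stated purely in terms of them, and any smooth parameterization of a traveling interface for which $z_t$ has no normal component forces $z_t$ to be the constant horizontal translation velocity — then $\gamma$ follows from $u$ and the geometry and is automatically stationary.
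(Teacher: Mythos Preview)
Your proposal is correct and follows essentially the same approach as the paper: for (i) both of you integrate $z_t=c$ and substitute into \eqref{velocity}, and for (ii) both of you set $z(\alpha,t):=\check z(\alpha,0)+ct$ and then deduce $\gamma_t=0$ from \eqref{velocity translates} via the Plemelj jump. The only noteworthy difference is that the paper makes the last step concrete by writing down the Cauchy-type integral of $\gamma(\cdot,t)-\gamma(\cdot,0)$, observing it vanishes off $S(t)$, and reading off $\gamma(\alpha,t)=\gamma(\alpha,0)$ from the two Plemelj limits, whereas you argue the same conclusion more qualitatively and spend extra effort on reparameterization covariance that the paper simply takes for granted.
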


\begin{proof} 
First we prove (i).
Since $z_t = c$, we have $z(\alpha,t) = z(\alpha,0) + c t$ which immediately gives \eqref{interface translates}.
Then, since $\gamma_t = 0$ we have $\gamma(\alpha,t) = \gamma(\alpha,0)$ and thus
\begin{equation} 
u^*(w,t)=\frac{1}{2 \pi i }\int_{\R} {\gamma(\alpha',t) \over w -z(\alpha',t)} d \alpha'\\=
\frac{1}{2 \pi i }\int_{\R} {\gamma(\alpha',0) \over w -(z(\alpha',0) +ct)} d \alpha' = u^*(w-ct,0).
\end{equation}
And so we have \eqref{velocity translates}.

Now we prove (ii). 
Suppose $(\check{z}(\beta,t),\check{g}(\beta,t))$ gives a traveling wave solution. The reparameterization  which 
yields \eqref{TW v1} can be written explicitly. Specifically, condition  \eqref{interface translates} 
implies that $z(\alpha,t) := \check{z}(\alpha,0) + ct$ is a parameterization of $S (t)$. Clearly $z_t = c$,
and we have the first equation in \eqref{TW v1}. 

Now let $\gamma(\alpha,t)$ be the corresponding vortex sheet-strength for the parameterization of $S (t)$
given by $z(\alpha,t)$. Since we have a traveling wave, we have \eqref{velocity translates}.   Define 
$$   
m(w,t) =:  {1 \over 2 \pi i} \int_\R {\gamma(\alpha',t) -\gamma(\alpha',0)\over w-ct -\check{z}(\alpha',0)} d \alpha'.  $$
Then for $w \notin S (t)$ we have 
\begin{equation}\begin{split}
m(w,t)  & 
= {1 \over 2 \pi i} \int_\R {\gamma(\alpha',t)\over w-(\check{z}(\alpha',0)+ct)} d \alpha - {1 \over 2 \pi i} \int_\R {\gamma(\alpha',0)\over (w-ct) -\check{z}(\alpha',0)} d \alpha'  \\
&  = u(w,t) - u(w-ct,0) = 0.  
\end{split}
\end{equation}
However, for a point $w_0=\check{z}(\alpha,0) + ct \in S (t)$, 
the Plemelj formulas state that 
$$
\lim_{w \to w_0^{\pm}} m(w) = \textrm{PV} {1 \over 2 \pi i} \int_\R {\gamma(\alpha',t) -\gamma(\alpha',0)\over\check{z}(\alpha,0) -\check{z}(\alpha',0)} d \alpha \pm {1 \over 2} {
\gamma(\alpha,t)-\gamma(\alpha,0) \over \check{z}_\alpha(\alpha,0)}
$$
where the ``$+$" and ``$-$" signs modifying $w_0$ in the limit indicate that the limit is taken from ``above" or ``below" $S (t)$, respectively. But, of course, $m$ is identically zero so that 
$$
{1 \over 2} \left(
\gamma(\alpha,t)-\gamma(\alpha,0)\right)= \pm \check{z}_\alpha(\alpha,0) 
\textrm{PV} {1 \over 2 \pi i} 
\int_\R {\gamma(\alpha',t) -\gamma(\alpha',0)\over\check{z}(\alpha,0) -\check{z}(\alpha',0)} d \alpha, 
$$
which in turn implies $\gamma(\alpha,t) = \gamma(\alpha,0)$. Since this is true for any $t$ and any $\alpha$, we see that $\gamma_t = 0$, the second equation in \eqref{TW v1}.

\end{proof}

\begin{remark} We additionally assume that $S (t)$ is parameterized to be proportional to arclength, {\it i.e.}
\begin{equation}\label{arclength param}
|z_\alpha| = \sigma= \text{constant} > 0
\end{equation}
for all $(\alpha,t)$. One may worry that the enforcement of the parameterization such that $z_t = c$ in \eqref{TW v1} is at odds with this sort of 
arclength parameterization. However, notice that $z_t = c$ implies that $z_{\alpha t} = 0$ which in turn implies that $z_\alpha$ (and thus $|z_\alpha|$) does not depend on time.
Then the reparamaterization of $S (t)$ given by $\widetilde{z}(\beta(\alpha),t) = z(\alpha,t)$ where $d \beta/d \alpha = |z_\alpha|/\sigma$ has $|\widetilde{z}_\beta| = \sigma$.   
Thus it is merely a convenience to assume \eqref{arclength param}.  
We will select a convenient choice for $\sigma$ later.
Arguments parallel to the above show that $z_t = c$ implies that $\theta_t = 0$ and thus we will view $\theta$ as being a function of $\alpha$ only.
\end{remark}

Now we insert the ansatz \eqref{TW v1} and the arclength parameterization \eqref{arclength param} into the equations of motion \eqref{z eqn} and \eqref{gamma eqn v1}. First, as observed in \cite{AAW}, 
we see that elementary trigonometry shows that $z_t = c$ and \eqref{z eqn} are equivalent to
\begin{equation}\label{U eqn}
U =- c \sin\theta  
\end{equation}
and
\begin{equation}\label{V eqn}
V=c \cos\theta .
\end{equation}
Notice this last equation selects $V$ in terms of the tangent angle $\theta$. That is to say \eqref{V eqn} should be viewed as the definition of $V$. On the other hand \eqref{U eqn} should be viewed as one of the equations we wish to solve. Using \eqref{this is U}, we rewrite it as
\begin{equation}\label{wn eqn}\Re(W^*N) + c \sin\theta  = 0.
\end{equation}

The above considerations transform \eqref{gamma eqn v1} to:
\begin{multline}\label{gamma eqn v2}
0=\tau\frac{\theta_{\alpha\alpha}}{\sigma}+\frac{\{(c\cos\theta-
\Re( W^* T))\gamma\}_{\alpha}}{\sigma}       \\
-2A\left(\frac{1}{8}\frac{(\gamma^{2})_{\alpha}}{\sigma^{2}}
+g\sin\theta -(c\cos\theta -
\Re(W^* T)
)
\Re(W^*_\alpha T)
\right).
\end{multline}
The last part of this expression may be rewritten as follows.  
Observe that 
\begin{equation}    \label{this}
-{1 \over 2}\partial_\alpha \{  (c \cos\theta -\Re(W^* T))^2  \}  
= (c \cos\theta  - \Re(W^*T))\left(c \sin\theta  \theta_\alpha +  \Re(W^* T_\alpha)+\Re(W^*_\alpha T) \right).
\end{equation}
Using \eqref{nice T}, we see that $T_\alpha = N \theta_\alpha$.  
Thus since $\theta$ is real valued and by virtue of \eqref{wn eqn}, we have 
$$c \sin\theta  \theta_\alpha +  \Re(W^* T_\alpha) = (c\sin\theta  + \Re(W^*N))\theta_\alpha  =  0.$$  
So \eqref{this} simplifies to 
$$
-{1 \over 2}\partial_\alpha (c \cos\theta -\Re(W^* T))^2= (c \cos\theta  - \Re(W^*T))\Re(W^*_\alpha T) .
$$
Hence 
\begin{multline}  \label {gamma**} 
0=\tau\frac{\theta_{\alpha\alpha}}{\sigma}+\frac{\{(c\cos\theta -
\Re( W^* T))\gamma\}_{\alpha}}{\sigma}\\
-2A\left(\frac{1}{8}\frac{(\gamma^{2})_{\alpha}}{\sigma^{2}}+
g \sin\theta +{1 \over 2}\partial_\alpha (c \cos\theta -\Re(W^* T))^2%
\right).
\end{multline}
which we rewrite as
\begin{multline}
-\theta_{\alpha \alpha}  =    \Phi(\theta,\gamma;c, \sigma) 
:=  {1 \over \tau}{(\partial_\alpha\{(c\cos\theta - \Re( W^* T))\gamma\})}\\
-       {A\over \tau}\left(\frac{1}{4\sigma}{\partial_\alpha(\gamma^{2})}
+      {2 g\sigma }\sin\theta +{\sigma}\partial_\alpha \{c \cos\theta -\Re(W^* T))^2\}  \right).
\end{multline}

Note that we have not specified $z$ as one of the dependencies of $\Phi$.  
This may seem unusual, given the prominent role of $z$
in computing the Birkhoff-Rott integral $W^*$. However, 
given $\sigma$  in \eqref{arclength param} one can determine $z(\alpha,t)$ solely from 
the tangent angle $\theta(\alpha)$, at least up to a rigid translation. 
Specifically, and without loss of generality, we have
\begin{equation}\label{reconstruct}
z(\alpha,0) = z(\alpha,t)-ct     =   \sigma \int_0^{\alpha}  e^{i \theta(\alpha')} d\alpha'.
\end{equation}
In this way, we view $W^*$ as being a function of $\theta$, $\gamma$ and $\sigma$. 

In short, we have shown the following:
\begin{lemma}\label{TWE}{\bf (Traveling wave equations, general version)}
Given functions $\theta(\alpha,t)$ and $\gamma(\alpha,t)$ and constants $c \in\R$ and $\sigma > 0$, compute $z(\alpha,t)$
from \eqref{reconstruct}, $W^*$ from \eqref{B} and $N$ and $T$ from \eqref{nice T}. If 
\begin{equation} \label{TW v3}
\Re(W^* N) + c \sin\theta  = 0 \quad \text{and} \quad \theta_{\alpha \alpha} + \Phi(\theta,\gamma;c,\sigma) = 0
\end{equation} 
holds then $(z(\alpha,t),\gamma(\alpha,t))$ is a traveling wave solution with speed $c$ for $\eqref{z eqn}$ and $\eqref{gamma eqn v1}$.

\end{lemma}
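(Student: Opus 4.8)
The plan is simply to run the computations of Section~3 in reverse and then apply Proposition~\ref{TW eqn lemma}(i). First I would check that the function $z$ produced by \eqref{reconstruct} has the properties used throughout the derivation. Regarding \eqref{reconstruct} as the definition $z(\alpha,t) := ct + \sigma\int_0^\alpha e^{i\theta(\alpha')}\,d\alpha'$ (with $\theta$ viewed as a function of $\alpha$ alone, as in the Remark), we get $z_t = c$, and $z_\alpha = \sigma e^{i\theta}$, so $|z_\alpha| = \sigma$ (hence $z_\alpha$ never vanishes, $z$ is a genuine parameterization, and the arclength normalization \eqref{arclength param} holds) and the tangent angle of $z$ is exactly the given $\theta$, so $T = e^{i\theta}$, $N = ie^{i\theta}$ as in \eqref{nice T}. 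Taking $\gamma$ independent of $t$ as well, we have $\gamma_t = 0$, so that \eqref{TW v1} holds; moreover $z(\alpha,t) - z(\alpha',t) = z(\alpha,0) - z(\alpha',0)$ is $t$-independent, whence $W^*$ is $t$-independent, which is why the term $\Re(W_t^* T)$ of \eqref{gamma eqn v1} drops out below.

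Next I would verify \eqref{z eqn} and \eqref{gamma eqn v1}. For \eqref{z eqn}: writing the desired identity $z_t = UN + VT$ as $c = (iU+V)e^{i\theta}$, i.e.\ $iU+V = c\cos\theta - ic\sin\theta$, and matching real and imaginary parts forces $V = c\cos\theta$ and $U = -c\sin\theta$; we take $V := c\cos\theta$ (legitimate, since the tangential velocity is free), and the first equation of \eqref{TW v3} is precisely the statement that the physically prescribed normal velocity $U = \Re(W^*N)$ of \eqref{this is U} equals $-c\sin\theta$; hence $z_t = c = UN + VT$ and \eqref{z eqn} holds. For \eqref{gamma eqn v1}: this is the chain of identities of Section~3 read backwards. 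Starting from $\theta_{\alpha\alpha} + \Phi(\theta,\gamma;c,\sigma) = 0$ one unpacks the definition of $\Phi$ to recover \eqref{gamma**}; one rewrites the term $\tfrac12\partial_\alpha(c\cos\theta - \Re(W^*T))^2$ using the identity \eqref{this} together with $T_\alpha = N\theta_\alpha$ and the first equation of \eqref{TW v3} in the form $c\sin\theta + \Re(W^*N) = 0$, obtaining \eqref{gamma eqn v2}; and then, since $z_t = c$, $|z_\alpha| = \sigma$, $V = c\cos\theta$, $\gamma_t = 0$, and $W_t^* = 0$, equation \eqref{gamma eqn v2} is exactly \eqref{gamma eqn v1} at this configuration. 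With \eqref{z eqn}, \eqref{gamma eqn v1} and \eqref{TW v1} all in hand, Proposition~\ref{TW eqn lemma}(i) yields that $(z(\alpha,t),\gamma(\alpha,t))$ is a traveling wave solution of speed $c$.

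There is no real obstacle. The only points requiring attention are that each step in Section~3 is genuinely reversible --- the passage between \eqref{z eqn} and \eqref{U eqn}--\eqref{V eqn} is an equivalence, as is every term-by-term rewriting running from \eqref{gamma eqn v1} to the definition of $\Phi$ --- and that \emph{both} equations in \eqref{TW v3} are used, because the simplification of the last term of \eqref{gamma eqn v2} relies on $\Re(W^*N) = -c\sin\theta$. The single genuinely analytic remark needed is that $W_t^* = 0$ for any configuration of the form \eqref{reconstruct} with $\gamma$ independent of $t$.
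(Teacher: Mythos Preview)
Your proposal is correct and is exactly the argument the paper intends: the paper has no separate proof of Lemma~\ref{TWE} but simply writes ``In short, we have shown the following,'' meaning that the derivation in Section~3 is to be read as an equivalence at each step. You make this explicit by running the computations back from \eqref{TW v3} to \eqref{z eqn}--\eqref{gamma eqn v1} and then invoking Proposition~\ref{TW eqn lemma}(i), and you correctly flag the two points that need attention (that $W^*_t=0$ for the configuration \eqref{reconstruct}, and that the first equation of \eqref{TW v3} is used in reversing the passage from \eqref{gamma eqn v2} to \eqref{gamma**}).
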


It happens that under the assumption that the traveling waves are spatially periodic,  \eqref{TW v3}
can be reformulated as ``identity plus compact" which, in turn will allow us to employ powerful abstract global bifurcation results.
The next section deals with how to deal with spatial periodicity.

\section{Spatial periodicity}
To be precise, by spatial periodicity we mean the following:  
\begin{definition}
Suppose that $(z(\alpha,t),\gamma(\alpha,t))$  is a solution of \eqref{z eqn} and \eqref{gamma eqn v1} such that
$$
S (t) = S (t) + M
$$
and
$$
u(w + M,t) = u(w,t)
$$
for all $t$ and $w \notin S (t)$, then the solution is said to be (horizontally) spatially periodic with period~$M$.
\end{definition}

It is clear if one has a spatially periodic curve $S (t)$ then it can be parameterized in such a way that the 
parameterization is $2\pi$-periodic in its dependence on the parameter. That is to say, 
the curve can be parameterized such that
\begin{equation}
\label{periodicity}
z(\alpha+2 \pi,t) = z(\alpha,t) +M.
\end{equation}
It is here that  we encounter a sticky issue. As described in Lemma \ref{TWE}, our goal is to find $\theta$ and $\gamma$ such that \eqref{TW v3} holds and additionally \eqref{periodicity} holds. 
The issue is that, given a function $\theta(\alpha)$ which is $2\pi-$periodic with respect to $\alpha$, 
it may not be the case that the curve $z$ reconstructed from it via \eqref{reconstruct} satisfies \eqref{periodicity}.
In fact, due to  \eqref{arclength param},    
 the periodicity \eqref{periodicity} is valid if and only if
\begin{equation} 
\label{reconstruct condition}
2\pi \overline{\cos\theta} :=
\int_0^{2\pi} \cos(\theta(\alpha')) d\alpha' ={M \over \sigma} \quad \text{and} \quad 
2\pi \overline{\sin\theta}   :=   \int_0^{2\pi} \sin(\theta(\alpha')) d\alpha' = 0.
\end{equation}
We could impose \eqref{reconstruct condition} on $\theta$. 
However, we follow another strategy which leaves $\theta$ free 
by modifying  \eqref{TW v3} so that  \eqref{reconstruct condition} holds.  

Indeed, we first fix the spatial period $M>0$.  
Suppose we are given a real $2\pi$-periodic function $\theta(\alpha)$ for which 
\begin{equation}\label{not vertical}
\overline{\cos\theta} \ne 0 
\end{equation}
so that the period $M$ of the curve will not vanish.  
Then we  define the ``renormalized curve" as 
\begin{equation}\label{this is z}  
\widetilde{Z} [\theta](\alpha)  =  
\frac{M}{2\pi \overline{\cos\theta}}  \left\{  
\int_0^\alpha e^{i\theta(\beta)} d\beta  -  i\alpha\ \overline{\sin\theta}  \right\}
\end{equation}
Of course, this function is one derivative smoother than $\theta$.  
A direct calculation shows that
\begin{equation}\label{periodicity 2}
\widetilde{Z}[\theta](\alpha+2\pi) = \widetilde{Z}[\theta](\alpha) + M.
\end{equation}
Thus $w=\widetilde{Z}[\theta]$ is the parameterization of a curve which satisfies 
\begin{equation}\label{per 2}
w(\alpha+2\pi) = w(\alpha) + M \text{ for all $\alpha$ in $\R$}.
\end{equation}  
Now $\ds \partial_\alpha\widetilde Z[\theta]   =   
\frac{M}{2\pi \overline{\cos\theta}}   (\exp(i\theta [\alpha])  -i\overline{\sin\theta})$, and 
the tangent and normal vectors for $\widetilde{Z}[\theta]$ are given by:
\begin{equation}\label{tangent def 2}
\widetilde{T} [\theta]:= {\partial_\alpha \widetilde{Z}[\theta]   /  \vert \partial_\alpha \widetilde{Z}[\theta] \vert}
\quad \text{ and } \quad
\widetilde{N} [\theta]:= {i \partial_\alpha \widetilde{Z}[\theta]   /   \vert \partial_\alpha \widetilde{Z}[\theta] \vert}.
\end{equation}
These expressions are not equal to $e^{i\theta}$ and $i e^{i \theta}$, as was the case for $T$ and $N$ in \eqref{nice T}. 

For a given real function $\gamma(\alpha)$       and parametrized curve $w(\alpha)$, 
define the {\it Birkhoff-Rott integral}  
\begin{equation*}\label{B2}
B[w]\gamma(\alpha):=\frac{1}{2 \pi i }\mathrm{PV}\int_{\R} {\gamma(\alpha') \over w(\alpha) -w(\alpha')} d \alpha'  .
\end{equation*}
Thus $W^*(\cdot,t)  =  B[z(\cdot,t)]\gamma(\cdot,t)$.  
								If $w$ satisfies \eqref{per 2}, 
we can rewrite this integral as  
\begin{equation}\label{B periodic 1}
B[{w}]\gamma(\alpha)=\frac{1}{2  i M}\mathrm{PV}\int_{0}^{2 \pi} 
\gamma(\alpha') \cot\left({\pi \over M}  (w(\alpha) -w(\alpha')) \right) d \alpha' 
\end{equation}
by means of Mittag-Leffler's famous series expansion for the cotangent (see, e.g., Chapter 3 of 
\cite{ablowitzFokas}).  
						Finally, 
for any real $2 \pi$-periodic functions $\theta$ and $\gamma$ and any constant $c \in \R$, define 
	\begin{multline} 
\widetilde{\Phi}(\theta,\gamma;c) 
:={1 \over \tau} \partial_\alpha \{c\cos\theta -
\Re( B[ \widetilde{Z}[\theta] ]\gamma\     \widetilde{T}[\theta]))\gamma  \}                                                                             \\
-{A\over \tau}\left(\frac{\pi \overline{\cos\theta}}{2M}     \partial_\alpha  {(\gamma^{2})}
+   \frac   {gM}   {\pi \overline{\cos\theta}       } 
\left(  \sin\theta  - \overline{\sin\theta} \right) 
+  \frac   {M}   {2\pi \overline{\cos\theta}}        \partial_\alpha 
\{   (c \cos\theta 
-\Re(B[ \widetilde{Z}[\theta] ]\gamma\, \widetilde{T}[\theta]))    ^2\}  
\right).
\end{multline}
						In terms of  
these definitions the basic equations are rewritten as follows:  
\begin{proposition}\label{TWE2}  
{\bf (Traveling wave equations, spatially periodic version)}
If the 
 $2\pi$-periodic functions  $\theta(\alpha)$, $\gamma(\alpha)$ and the constant $c \ne 0$  
 satisfy \eqref{not vertical} and 
\begin{equation} \label{TW v4}
\Re(B[ \widetilde{Z}[\theta] ]\gamma\, \widetilde{N}[\theta]) + c \sin\theta  = 0 \quad \text{and} \quad \theta_{\alpha \alpha} + \widetilde{\Phi}(\theta,\gamma;c) = 0
\end{equation} 
then
 $(\widetilde{Z}[\theta](\alpha)+ct,\gamma(\alpha,t))$ is a spatially periodic traveling wave solution with speed $c$ and period $M$ for $\eqref{z eqn}$ and $\eqref{gamma eqn v1}$.
\end{proposition}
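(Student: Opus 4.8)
The plan is to reduce Proposition~\ref{TWE2} to Lemma~\ref{TWE}, the general (non-periodic) traveling wave equations, by verifying that the renormalized curve $\widetilde{Z}[\theta]$ together with the given $\gamma$ furnishes a genuine solution of \eqref{TW v3} in the sense required there, and that it additionally satisfies the periodicity condition \eqref{periodicity}. The key observation is that, although $\widetilde{Z}[\theta]$ is \emph{not} the curve reconstructed from $\theta$ via \eqref{reconstruct} --- indeed its tangent angle is not $\theta$ but some other function, call it $\vartheta$, determined by $\partial_\alpha\widetilde Z[\theta] = |\partial_\alpha \widetilde Z[\theta]|e^{i\vartheta}$ --- the hypotheses \eqref{TW v4} have been written precisely so that the pair $(\widetilde Z[\theta],\gamma)$ satisfies the physical equations with the \emph{geometric} quantities $\widetilde T[\theta]$, $\widetilde N[\theta]$ replacing $e^{i\theta}$, $ie^{i\theta}$.

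First I would record the structural facts about $\widetilde Z[\theta]$: from \eqref{this is z} one computes $\partial_\alpha \widetilde Z[\theta] = \frac{M}{2\pi\overline{\cos\theta}}(e^{i\theta}-i\overline{\sin\theta})$, so that
\be
|\partial_\alpha\widetilde Z[\theta]|^2 = \Big(\frac{M}{2\pi\overline{\cos\theta}}\Big)^2\big(1 - 2\overline{\sin\theta}\,\sin\theta + \overline{\sin\theta}^{\,2}\big),
\ee
which is in general \emph{not} constant in $\alpha$ --- so \eqref{arclength param} fails for $\widetilde Z[\theta]$. This is the crux of the matter: Lemma~\ref{TWE} as literally stated assumes the arclength normalization. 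I would therefore need either (i) to invoke the version of the traveling wave equations that precedes the imposition of \eqref{arclength param} --- namely \eqref{U eqn}, \eqref{V eqn}, \eqref{wn eqn} together with the pre-arclength $\gamma$-equation, which hold for \emph{any} parameterization with $z_t=c$ --- or (ii) to reparameterize $\widetilde Z[\theta]$ to arclength, carry out the reduction, and then translate back. Approach (i) is cleaner: the content of Section~2--3 up through \eqref{this is U}, \eqref{gamma eqn v1} is parameterization-independent modulo the choice $V = \Re(W^*T)$-compatible-with-$z_t=c$, and \eqref{U eqn}, \eqref{V eqn} follow from $z_t=c$ alone by the elementary trigonometry cited from \cite{AAW}. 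So the first equation in \eqref{TW v4}, $\Re(B[\widetilde Z[\theta]]\gamma\,\widetilde N[\theta]) + c\sin\theta = 0$, is exactly \eqref{wn eqn} for the curve $\widetilde Z[\theta]$, once one checks that $\sin\theta$ here equals $\sin\vartheta\cdot|\partial_\alpha\widetilde Z[\theta]|\cdot\frac{2\pi\overline{\cos\theta}}{M}$ --- i.e. one must unwind that $\Im(e^{i\theta}) = \Im(\partial_\alpha\widetilde Z[\theta])\cdot\frac{2\pi\overline{\cos\theta}}{M}$, which is immediate from the displayed formula for $\partial_\alpha\widetilde Z[\theta]$ since $\overline{\sin\theta}$ is real.

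Next I would verify that the second equation in \eqref{TW v4}, $\theta_{\alpha\alpha} + \widetilde\Phi(\theta,\gamma;c)=0$, is equivalent to the $\gamma$-evolution equation \eqref{gamma eqn v1} (with $\gamma_t=0$) written for the curve $\widetilde Z[\theta]$. Here I would start from \eqref{gamma**}, which is the arclength-normalized traveling-wave $\gamma$-equation with $|z_\alpha|=\sigma$, and observe that for $\widetilde Z[\theta]$ the role of the constant $\sigma$ is played --- \emph{after} one passes to the arclength reparameterization, or equivalently in the identities that are genuinely parameterization-covariant --- by $\frac{M}{2\pi\overline{\cos\theta}}$, which is exactly the constant appearing in every coefficient of $\widetilde\Phi$: note $\frac{1}{4\sigma}\leftrightarrow \frac{\pi\overline{\cos\theta}}{2M}$, $2g\sigma\leftrightarrow \frac{gM}{\pi\overline{\cos\theta}}$, $\sigma\leftrightarrow\frac{M}{2\pi\overline{\cos\theta}}$. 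The term $g\sin\theta$ in \eqref{gamma**} becomes $g(\sin\theta - \overline{\sin\theta})$ because the renormalization \eqref{this is z} subtracts the $i\alpha\,\overline{\sin\theta}$ correction, so that $\partial_\alpha \Im\widetilde Z[\theta] = \frac{M}{2\pi\overline{\cos\theta}}(\sin\theta - \overline{\sin\theta})$, i.e. the physical $y_\alpha$ (which is what enters \eqref{gamma eqn v1}) is proportional to $\sin\theta-\overline{\sin\theta}$ rather than to $\sin\theta$. I would carry this bookkeeping out term by term, using also that $\Re(\widetilde W^*\widetilde T)$ and $\Re(\widetilde W^*_\alpha \widetilde T)$ with $\widetilde W^* = B[\widetilde Z[\theta]]\gamma$ satisfy the same algebraic identity \eqref{this} that was used to pass from \eqref{gamma eqn v2} to \eqref{gamma**} --- that identity relied only on $\widetilde T_\alpha = \widetilde N\,(\text{angle})_\alpha$ and on the first equation of \eqref{TW v4}, both of which hold.

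Finally I would close the argument: by \eqref{periodicity 2} the curve $\widetilde Z[\theta]$ satisfies \eqref{periodicity}, and $\gamma$ being $2\pi$-periodic, the reconstructed Eulerian velocity \eqref{velocity} is $M$-periodic in $w$; together with $z_t = \partial_t(\widetilde Z[\theta](\alpha)+ct) = c$ and $\gamma_t=0$, Proposition~\ref{TW eqn lemma}(i) gives that $(\widetilde Z[\theta](\alpha)+ct,\gamma(\alpha,t))$ is a traveling wave of speed $c$, and the periodicity statement of the definition in Section~4 is met with period $M$. The representation \eqref{B periodic 1} of the Birkhoff--Rott integral via the cotangent series is valid precisely because \eqref{per 2} holds for $\widetilde Z[\theta]$, so $B[\widetilde Z[\theta]]\gamma$ appearing in \eqref{TW v4} is the same object as the periodic Birkhoff--Rott integral; no separate verification is needed beyond citing \cite{ablowitzFokas}. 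The main obstacle, as flagged above, is the non-arclength nature of $\widetilde Z[\theta]$: one must be careful that every identity borrowed from Section~3 is invoked either in its pre-normalization form or after an explicit arclength reparameterization, and that the constant $\frac{M}{2\pi\overline{\cos\theta}}$ is correctly identified as the surrogate for $\sigma$ in each term --- this is where an error would most naturally creep in, so I would double-check the three coefficient correspondences and the $\sin\theta\mapsto\sin\theta-\overline{\sin\theta}$ replacement with particular care.
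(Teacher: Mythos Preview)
There is a genuine gap in your approach: you never establish that $\overline{\sin\theta}=0$, and without this fact the term-by-term matching you propose does not go through.

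The paper's proof begins by invoking Lemma~\ref{incompressibility} (the incompressibility/divergence-theorem lemma) to show that $\int_0^{2\pi}\Re(B[\widetilde Z[\theta]]\gamma\,\widetilde N[\theta])\,d\alpha=0$. Integrating the first equation of \eqref{TW v4} over a period and using $c\ne 0$ then forces $\overline{\sin\theta}=0$. This is the decisive step: once $\overline{\sin\theta}=0$, the correction term in \eqref{this is z} vanishes, so $\partial_\alpha\widetilde Z[\theta]=\tfrac{M}{2\pi\overline{\cos\theta}}e^{i\theta}$, the curve \emph{is} arclength-parameterized with constant $\sigma=M/(2\pi\overline{\cos\theta})$, the tangent angle of $\widetilde Z[\theta]$ is exactly $\theta$, and one has $\widetilde T[\theta]=e^{i\theta}$, $\widetilde N[\theta]=ie^{i\theta}$, and $\widetilde\Phi(\theta,\gamma;c)=\Phi(\theta,\gamma;c,\sigma)$. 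Then \eqref{TW v4} literally coincides with \eqref{TW v3} and Lemma~\ref{TWE} applies directly.

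Your approach instead assumes $\overline{\sin\theta}\ne 0$ in general and tries to work around the non-arclength parameterization. But then your own check fails: you claim $\Im(e^{i\theta})=\Im(\partial_\alpha\widetilde Z[\theta])\cdot\tfrac{2\pi\overline{\cos\theta}}{M}$, yet the right side equals $\sin\theta-\overline{\sin\theta}$, not $\sin\theta$ (the subtracted $i\overline{\sin\theta}$ is purely imaginary, so it does contribute to the imaginary part). Consequently the first equation of \eqref{TW v4} with $c\sin\theta$ does \emph{not} match the physical normal-velocity equation, which involves $c\sin\vartheta$ with $\vartheta$ the true tangent angle of $\widetilde Z[\theta]$. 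The same mismatch recurs in the second equation, where the surface-tension term $\theta_{\alpha\alpha}$ should be $\vartheta_{\alpha\alpha}$. The resolution is not a careful bookkeeping of pre-arclength identities, but rather the observation---via incompressibility and the hypothesis $c\ne0$---that $\overline{\sin\theta}$ must vanish, after which $\vartheta=\theta$ and all your concerns dissolve.
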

\begin{proof}
Putting $w = \widetilde{Z}[\theta]$,  from the definitions above we have 
$\widetilde{N}[{\theta}]=iw_\alpha/|w_\alpha |$. 
Thus by  Lemma \ref{incompressibility} below,  
$$\displaystyle
\int_0^{2 \pi} \Re(B[ \widetilde{Z}[\theta] ]\gamma  \ \widetilde{N}[\theta])\ d\alpha = 0.$$
Together with the first equation in \eqref{TW v4} and the fact $c \ne 0$, this gives 
 \begin{equation}\label{win}
\overline{\sin \theta} = 0.\end{equation} 
Now we let $\sigma = M/(2\pi\overline{\cos\theta})$      and compute $z(\alpha,t)$
from \eqref{reconstruct}, $W^*$ from \eqref{B}, and $N$ and $T$ from \eqref{tangent def}.
By \eqref{reconstruct} and \eqref{this is z}  
 we see that
$z(\alpha,t) = \widetilde{Z}[\theta](\alpha)+ct$. This in turn gives 
$W^*=B[ \widetilde{Z}[\theta] ]\gamma$, $N=\widetilde{N}[\theta]$ and $T=\widetilde{T}[\theta]$. 
Together with the fact that $\overline{\sin\theta} = 0$, this shows that 
$$
\widetilde{\Phi}(\theta,\gamma;c)=\Phi(\theta,\gamma;c,\sigma).
$$
Thus both equations in \eqref{TW v4} coincide exactly their counterparts in \eqref{TW v3}. Proposition \ref{TWE} then shows that $z(\alpha,t)$ is a traveling
wave with speed $c\in\R$. We know that $z(\alpha,t)$ is $M-$periodic since it was constructed from $\theta$ with $\widetilde{Z}$.  
\end{proof}

\begin{lemma} \label{incompressibility} 
If $w(\cdot)$ satisfies \eqref{per 2} 
and $\gamma(\cdot)$ is a $2\pi$-periodic function,  then 
$$
\int_0^{2 \pi} \Re\left(   B[w]\gamma(\alpha)  {iw_\alpha(\alpha) \over | w_\alpha (\alpha)| } \right) d\alpha = 0.
$$
\end{lemma}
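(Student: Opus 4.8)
The plan is to read the identity as an incompressibility statement. Recall that $B[w]\gamma$ is the average of the two one-sided boundary values on $\Gamma:=\{w(\alpha):\alpha\in\R\}$ of the complex conjugate of the Eulerian fluid velocity generated by the vortex sheet (this follows from the Plemelj formulas), so $\Re\bigl(B[w]\gamma\,\tfrac{iw_\alpha}{|w_\alpha|}\bigr)$ is the component of the fluid velocity normal to $\Gamma$, the same from either side. Thus the assertion is that the net flux of the fluid through one period of the interface vanishes, which should follow from incompressibility in each of the two fluid regions. To make this precise I would use the periodic (cotangent) form \eqref{B periodic 1} to introduce, for $\zeta\notin\Gamma$,
\[
G(\zeta):=\frac{1}{2iM}\int_0^{2\pi}\gamma(\alpha')\cot\!\Bigl(\tfrac{\pi}{M}\bigl(\zeta-w(\alpha')\bigr)\Bigr)\,d\alpha',
\]
which is holomorphic off $\Gamma$ and $M$-periodic in $\zeta$, so that the velocity $\overline{G}$ is divergence-free (and curl-free, the flow being irrotational). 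By the Plemelj formulas the two one-sided traces of $G$ on $\Gamma$ are $G^{\pm}=B[w]\gamma\pm\tfrac12\,\gamma/w_\alpha$, so it is enough to prove the claimed identity with $B[w]\gamma$ replaced in turn by $G^{+}$ and $G^{-}$ and then average, using $B[w]\gamma=\tfrac12(G^{+}+G^{-})$. Finally, since $\cot z\to\mp i$ as $\Im z\to\pm\infty$ (uniformly and exponentially fast in $\Re z$), $G(\zeta)$ converges to the real constant $\mp\pi\overline\gamma/M$ as $\Im\zeta\to\pm\infty$; in particular the limiting flow is purely horizontal.

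Next I would fix a period strip $\{a\le\Re\zeta\le a+M\}$, let $\Omega_R^{+}$ be the part of it lying above $\Gamma$ and below the line $\Im\zeta=R$, and apply the divergence theorem to the divergence-free field $\overline{G}$ on $\Omega_R^{+}$. The fluxes through the two vertical sides cancel because $G$ is $M$-periodic; the flux through the top segment tends to $0$ as $R\to\infty$ because there $\overline{G}$ approaches a horizontal constant; and hence the flux of $\overline{G^{+}}$ through one period of $\Gamma$ vanishes. Rewriting that flux in the parametrization $w(\alpha)$ --- with upward unit normal $iw_\alpha/|w_\alpha|$ and arclength element $|w_\alpha|\,d\alpha$ --- gives precisely the stated identity with $G^{+}$ in place of $B[w]\gamma$. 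The mirror-image argument on the region below $\Gamma$ gives it for $G^{-}$, and averaging the two completes the proof. (As elsewhere in the paper one uses that $w$ is smooth with $w_\alpha$ nonvanishing, so that $\Gamma$ really does separate the strip into an upper and a lower component.)

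I expect the step requiring the most care to be the treatment of the singular integral: justifying the limits implicit in the Plemelj formulas and handling the diagonal $\alpha=\alpha'$ --- equivalently, the behavior of $\overline{G}$ right at the sheet --- by the usual indentation argument, and verifying the decay of $G$ to a horizontal constant at $\Im\zeta\to\pm\infty$ that is needed to annihilate the top flux (though this last point is immediate from $\cot z\to\mp i$). A more computational route that sidesteps the fluid picture is also available: substitute \eqref{B periodic 1} into the integral, interchange the (principal-value) integrations, and evaluate the resulting inner integral over $\alpha$ --- a closed-contour integral of the cotangent --- via the antiderivative $\tfrac{M}{\pi}\log\sin\!\bigl(\tfrac{\pi}{M}(w(\alpha)-w(\alpha'))\bigr)$, using $w(\alpha+2\pi)=w(\alpha)+M$, the identity $\sin(z+\pi)=-\sin z$, and the half-residue picked up at the pole $\alpha=\alpha'$; the inner integral turns out to be purely imaginary, so its real part, and therefore the entire expression, is $0$.
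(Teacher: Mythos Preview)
Your approach coincides with the paper's, which merely says the identity ``follows from the fact that $B[w](\gamma)$ extends to a divergence-free field in the interior of the fluid region, and from the Divergence Theorem.'' You supply the details that one-line sketch omits: the explicit holomorphic extension $G$, its decay to a real (horizontal) constant as $\Im\zeta\to\pm\infty$, the cancellation of the side fluxes by $M$-periodicity, and the passage from $G^{\pm}$ back to $B[w]\gamma$ via the Plemelj relations.

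There is, however, one slip---shared by the paper's sketch. In your own words the flux through $\Gamma$ is written ``with upward unit normal $iw_\alpha/|w_\alpha|$ and arclength element $|w_\alpha|\,d\alpha$,'' so what the divergence theorem actually delivers is
\[
\int_0^{2\pi}\Re\!\left(B[w]\gamma\;\frac{iw_\alpha}{|w_\alpha|}\right)|w_\alpha|\,d\alpha
\;=\;\int_0^{2\pi}\Re\bigl(B[w]\gamma\cdot iw_\alpha\bigr)\,d\alpha\;=\;0,
\]
which carries an extra factor $|w_\alpha|$ relative to the stated identity. The two agree when $|w_\alpha|$ is constant, but the lemma as literally written fails otherwise: with $M=2\pi$, $w(\beta)=\beta+\varepsilon\sin\beta$ (the real axis, reparametrized) and $\gamma(\beta)=\sin\bigl(w(\beta)\bigr)\,w_\beta(\beta)$, one finds $B[w]\gamma(\beta)=\tfrac{i}{2}\cos\bigl(w(\beta)\bigr)$, and the lemma's integral equals $-\tfrac12\int_0^{2\pi}\cos(\beta+\varepsilon\sin\beta)\,d\beta=\tfrac{\varepsilon\pi}{2}+O(\varepsilon^{2})\neq 0$. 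The flux version is the identity your divergence-theorem argument (and the paper's) actually proves, and it is also the version that is genuinely needed in the application that follows.
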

\begin{proof}  This lemma says that the mean value of the normal component of $B[w](\gamma)$ is equal to zero.  This follows from the fact that
$B[w](\gamma)$ extends to a divergence-free field in the interior of the fluid region, and from the Divergence Theorem.   
\end{proof} 

\section{Reformulation  as ``identity plus compact" }

\subsection{Mapping properties}
Let
$\H^s_\text{per}:=\H^s_{\text{per}}[0,2\pi]$ be the usual Sobolev space of $2\pi-$periodic functions from $\R$ to $\C$ whose first $s \in {\bf N}$ weak derivatives  are  square integrable.  
Likewise for intervals $I \subset \R$, let
 $\H^s(I)$ be the usual Sobolev space of functions from $I$ to $\C$  whose first $s \in {\bf N}$ weak derivatives  are  square integrable.  Finally,
 $\H^s_\text{loc}$ is the set of all functions from $\R$ to $\C$ which are in $\H^s(I)$ for all bounded intervals $I \subset \R$.
 
By \eqref{periodicity 2},  $\widetilde{Z}[\theta](\alpha)-M \alpha/2\pi$ is periodic. 
Let
 $$
 \H^s_M:=\left\{ w \in \H^s_\text{loc}: w(\alpha) - M \alpha/2 \pi  \in \H^s_\text{per} \right\}.
 $$
Clearly $\H^s_M$ is a complete metric space with the metric of $\H^s_\text{per}$
We have the following lemma concerning the renormalized curve $\widetilde{Z}[\theta]$:
\begin{lemma}\label{Z props}
For $s \ge 1$ and $h\ge0$ let 
$$
\U^s_h:=\left\{ \theta \in \H^s_\text{per} : \int_0^{2\pi} \cos(\theta(\alpha)) d\alpha > h \right\}.
$$
Then the map $\widetilde{Z}[\theta]$ defined in \eqref{this is z} is smooth  from $\U^s_h$ into $\H^{s+1}_M$
and
the maps $\widetilde{N}[\theta]$ and $\widetilde{T}[\theta]$ given in \eqref{tangent def 2}  are 
smooth from $\U^s_h$ into $\H^s_\text{per}$. 
Moreover, for any $h > 0$, there exists $C>0$ such that
\begin{equation}\label{Z estimate}
\| \widetilde{T} [\theta] \|_{\H^s_\text{per}} + \| \widetilde{N} [\theta] \|_{\H^s_\text{per}} 
+ \| \widetilde{Z}[\theta] \|_{\H^{s+1}(0,2\pi)} +
\left \| {1 \over \partial_\alpha \widetilde{Z}[\theta]} \right \|_{\H^{s}_\per}
\le C(1+ \| \theta \|_{\H^s_\text{per}}).
\end{equation}
for all $\theta \in \U^s_h$.
\end{lemma}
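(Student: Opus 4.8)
The plan is to reduce everything to two standard facts about the spaces $\H^s_\text{per}$ with $s\ge 1$: first, that $\H^s_\text{per}$ is a Banach algebra under pointwise multiplication (in one dimension this holds for every $s\ge 1$, since $\H^1_\text{per}\hookrightarrow L^\infty$ and $\|fg\|_{\H^s_\text{per}}\le C\|f\|_{\H^s_\text{per}}\|g\|_{\H^s_\text{per}}$); and second, that for any $C^\infty$ function $G\colon\R\to\C$ all of whose derivatives are bounded — in particular $G=e^{i\,\cdot}$, $G=\cos$, $G=\sin$ — the Nemytskii operator $\theta\mapsto G\circ\theta$ is a smooth map of $\H^s_\text{per}$ into itself, with a tame bound $\|G\circ\theta\|_{\H^s_\text{per}}\le\Lambda(\|\theta\|_{\H^s_\text{per}})$ for some continuous increasing $\Lambda$. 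From the second fact, together with the boundedness of the mean-value functional, the scalars $\theta\mapsto\overline{\cos\theta}$ and $\theta\mapsto\overline{\sin\theta}$ depend smoothly on $\theta\in\H^s_\text{per}$, and the antiderivative $f\mapsto\bigl(\alpha\mapsto\int_0^\alpha f\bigr)$ is bounded linear from $\H^s_\text{per}$ into $\H^{s+1}(0,2\pi)$, hence smooth. Reading off \eqref{this is z}, $\widetilde{Z}[\theta]$ is the product of the scalar $\tfrac{M}{2\pi\overline{\cos\theta}}$ with $\bigl(\alpha\mapsto\int_0^\alpha e^{i\theta}\bigr)-i\alpha\,\overline{\sin\theta}$; since $x\mapsto 1/x$ is smooth (indeed real analytic) on $\{x>0\}$ and $\overline{\cos\theta}>0$ exactly on $\U^s_h$ for every $h\ge0$, composition shows $\widetilde{Z}[\cdot]$ is a smooth map of $\U^s_h$ into $\H^{s+1}_\text{loc}$. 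Differentiating \eqref{this is z} gives $\partial_\alpha\widetilde{Z}[\theta]=\tfrac{M}{2\pi\overline{\cos\theta}}\bigl(e^{i\theta}-i\overline{\sin\theta}\bigr)$, which is $2\pi$-periodic with mean $\tfrac{M}{2\pi}$; hence $\widetilde{Z}[\theta](\alpha)-M\alpha/2\pi\in\H^{s+1}_\text{per}$, i.e.\ $\widetilde{Z}[\theta]\in\H^{s+1}_M$, and $\widetilde{Z}[\cdot]\colon\U^s_h\to\H^{s+1}_M$ is smooth.

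For $\widetilde{T}[\theta]$ and $\widetilde{N}[\theta]=i\widetilde{T}[\theta]$ the essential point is a pointwise lower bound on $|\partial_\alpha\widetilde{Z}[\theta]|$. A direct computation gives
\[
|\partial_\alpha\widetilde{Z}[\theta]|^2=\Bigl(\tfrac{M}{2\pi\overline{\cos\theta}}\Bigr)^{2}\bigl(1+\overline{\sin\theta}^{\,2}-2\,\overline{\sin\theta}\,\sin\theta\bigr)\ \ge\ \Bigl(\tfrac{M}{2\pi\overline{\cos\theta}}\Bigr)^{2}\bigl(1-|\overline{\sin\theta}|\bigr)^{2},
\]
while Cauchy--Schwarz yields $\overline{\sin\theta}^{\,2}+\overline{\cos\theta}^{\,2}\le\overline{\sin^2\theta+\cos^2\theta}=1$, so $1-|\overline{\sin\theta}|\ge 1-\sqrt{1-\overline{\cos\theta}^{\,2}}>0$. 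Moreover $|\partial_\alpha\widetilde{Z}[\theta]|^2$ is a polynomial in $\sin\theta,\cos\theta$ with coefficients built from $\overline{\sin\theta},\overline{\cos\theta}$, hence lies in $\H^s_\text{per}$ and depends smoothly on $\theta$; since $g\mapsto g^{-1/2}$ is smooth on the (open) set of real $\H^s_\text{per}$ functions bounded below by a positive constant, the maps $\theta\mapsto|\partial_\alpha\widetilde{Z}[\theta]|^{-1}$ and $\theta\mapsto(\partial_\alpha\widetilde{Z}[\theta])^{-1}$ are smooth on $\U^s_h$, and then by the algebra property $\widetilde{T}[\theta]=\partial_\alpha\widetilde{Z}[\theta]\cdot|\partial_\alpha\widetilde{Z}[\theta]|^{-1}$ and $\widetilde{N}[\theta]$ are smooth from $\U^s_h$ into $\H^s_\text{per}$. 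Smoothness for every $h\ge0$ (not just $h>0$) is then automatic, since smoothness is local and near any fixed $\theta_0\in\U^s_0$ the quantities $\overline{\cos\theta}$ and $|\partial_\alpha\widetilde{Z}[\theta]|$ stay bounded below by positive constants on an $\H^s_\text{per}$-neighborhood.

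It remains to obtain the quantitative bound \eqref{Z estimate}, where we now fix $h>0$. On $\U^s_h$ one has the uniform bounds $\overline{\cos\theta}^{-1}\le 2\pi/h$ and $|\partial_\alpha\widetilde{Z}[\theta]|^{-2}\le C(h)$; feeding these fixed constants into the Banach-algebra product estimates, the tame composition bounds for $e^{i\theta},\sin\theta,\cos\theta$, and the one-derivative gain of the antiderivative gives the inequality, with $C$ depending on $h$. I expect this final step to be the main technical point: one must track how the constant depends on $\|\theta\|_{\H^s_\text{per}}$ through the composition operators, controlling the lowest-order terms by the embedding $\H^1_\text{per}\hookrightarrow L^\infty$ and the top-order term by a tame product estimate. (For $s\ge 2$ one should be somewhat careful about whether the resulting dependence on $\|\theta\|_{\H^s_\text{per}}$ is genuinely affine or only polynomial; in any case the estimate shows that these maps carry bounded sets to bounded sets, which is the property needed in the ``identity plus compact'' reformulation of the next section.)
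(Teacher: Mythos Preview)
Your argument is correct and complete for the qualitative statements (smoothness, mapping properties). The one substantive difference from the paper is in how you obtain the pointwise lower bound on $|\partial_\alpha\widetilde{Z}[\theta]|$. The paper argues indirectly: it shows $\overline{\sin\theta}\le 1-\tfrac{1}{100\pi^2}h^2$ by the elementary pointwise inequality $\sin x+\tfrac{1}{25}\cos^2 x\le 1$ together with Cauchy--Schwarz on the cosine mean, and then infers that $e^{i\theta}-i\,\overline{\sin\theta}$ cannot vanish. Your route is more direct and arguably cleaner: you compute $|e^{i\theta}-i\,\overline{\sin\theta}|^2=1+\overline{\sin\theta}^{\,2}-2\,\overline{\sin\theta}\sin\theta\ge(1-|\overline{\sin\theta}|)^2$ and then bound $|\overline{\sin\theta}|$ via Jensen's inequality $\overline{\sin\theta}^{\,2}+\overline{\cos\theta}^{\,2}\le 1$. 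Both methods yield a uniform positive lower bound depending only on $h$; yours gives it explicitly as $1-\sqrt{1-(h/2\pi)^2}$, and avoids the slightly ad~hoc trigonometric identity.

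Your parenthetical caution about the precise growth in \eqref{Z estimate} is well taken: for $s\ge 2$, a Fa\`a~di~Bruno expansion of, say, $\partial_\alpha^s e^{i\theta}$ produces products of lower derivatives of $\theta$, and the resulting $L^2$ bound is in general polynomial rather than affine in $\|\theta\|_{\H^s_\text{per}}$. The paper's proof sweeps this under ``a series of naive estimates'' and ``routine estimates'' without further comment. As you correctly observe, only boundedness on bounded sets is used downstream (in the compactness argument of Theorem~\ref{TWE5}), so this does not affect the rest of the paper.
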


\begin{proof} 
As already mentioned, $\widetilde{Z}[\theta]$ is one derivative smoother than $\theta$.  
 A series of naive estimates leads to the bound on $ \widetilde{Z}[\theta] $ in \eqref{Z estimate}.  Next, since $\theta$ belongs to $\U^s_h$, 
$$
\overline{\sin\theta} + {1 \over 100 \pi^2} h^2    
\le {1 \over 2\pi} \left[ \int_0^{2\pi}\sin(\theta(a))da + {1 \over 50\pi}\left(\int_0^{2 \pi} \cos(\theta(a))da  \right)^2 \right].
$$
The Cauchy-Schwarz inequality on the cosine term leads to  
\begin{equation*}
\overline{\sin\theta}+ {1 \over 100 \pi} h^2    
\le   {1 \over 2\pi} \int_0^{2\pi}\left( \sin(\theta(a)) + {1 \over 25} \cos^2(\theta(a)) \right) da    \le 1  
\end{equation*}
since  $\ds \sin(x) + (1 / 25) \cos^2(x) \le 1$
Thus 
$$
\overline{\sin\theta} \le 1 -  {1 \over 100 \pi^2} h^2.
$$
For  $h >0$, this implies that $ e^{i\theta} - i\, \overline{\sin\theta}$ cannot vanish. 
Hence  $1/\partial_\alpha \widetilde{Z}[\theta] \in \H^s_\per$ and the remaining bounds in \eqref{Z estimate}
follow by routine estimates. 
The smooth dependence of $\widetilde{Z}$, $\widetilde{N}$ and $\widetilde{T}$  on $\theta$ 
is a consequence of standard results on compositions. 
\end{proof}

The most singular part of the Birkhoff-Rott operator $B$ is essentially the periodic Hilbert transform $H$, 
which is defined as  
$$
H \gamma(\alpha):= {1 \over 2 \pi } \mathrm{PV} \int_0^{2 \pi} \gamma(\alpha') \cot\left( {1 \over 2} (\alpha -\alpha') \right) d \alpha'.   $$
It is well-known that 
for any  $s \ge 0$, $H$ is a bounded linear map from $\H^s_\text{per}$ to $\H^s_{\per,0}$ 
(the subscript $0$ here indicates that the average over a period vanishes).
Moreover, 
$H$ annihilates the constant functions and 
$H^2 \gamma  = -\gamma + \overline\gamma$, where $\ds \overline{\gamma}=:{1 \over 2\pi} \int_0^{2\pi} \gamma(a) da.  $
In order to conclude that the leading singularity of the function $B[w]\gamma$ is given in terms of $H\gamma$, 
we require a ``chord-arc" condition, as stated in the following lemma.  
\begin{lemma}   
\label{hilbert plus smooth}
For $b \geq 0$ and $s \ge 2$, let the ``chord-arc space" be 
$$
\mathcal{C}_{b}^s:=\left\{ w(\alpha) \in \H^s_M :
\inf_{\alpha,\alpha' \in [0,2\pi]} \left \vert {w(\alpha')-w(\alpha) \over \alpha' - \alpha} \right \vert > b   \right\}
$$
and the remainder operator $K$ be 
$$
 {K}[w] \gamma(\alpha):=
B[w]\gamma(\alpha) - {1 \over 2   i w_\alpha(\alpha)} H \gamma(\alpha).$$
 						Then 
 $(w,\gamma) \mapsto {K}[w]\gamma$ is a smooth  map from
$\mathcal{C}^s_b  \times \H^1_\per \to \H^{s-1}_\per.$ If $b>0,$ 
then there exists a constant $C>0$ such that for all $w\in\mathcal{C}^{s}_{b}$ and for all
$\gamma\in\mathcal{H}^{1}_{per},$
$$
\| {K}[w] \gamma\|_{\H^{s-1}_\text{per}} \le C \| \gamma \|_{\H^1_\text{per}} \exp \left\{ C \| w \|_{\H^s(0,2\pi)}\right\}.
$$
\end{lemma}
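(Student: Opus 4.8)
The plan is to isolate the difference $K[w]\gamma = B[w]\gamma - \frac{1}{2iw_\alpha}H\gamma$ and show that, after the most singular contribution has been subtracted, the kernel of the remaining integral operator is one order smoother than that of $B[w]$ itself. Starting from the periodic representation \eqref{B periodic 1},
$$
B[w]\gamma(\alpha)=\frac{1}{2iM}\,\mathrm{PV}\int_0^{2\pi}\gamma(\alpha')\cot\!\left(\frac{\pi}{M}(w(\alpha)-w(\alpha'))\right)d\alpha',
$$
I would write $\cot\big(\frac{\pi}{M}(w(\alpha)-w(\alpha'))\big)$ as the sum of its principal part $\frac{M}{\pi(w(\alpha)-w(\alpha'))}$ plus a function that is smooth in its argument, and then further decompose $\frac{1}{w(\alpha)-w(\alpha')}$ as $\frac{1}{w_\alpha(\alpha)(\alpha-\alpha')}$ plus a remainder. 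The first piece reproduces exactly $\frac{1}{2iw_\alpha(\alpha)}H\gamma(\alpha)$ (up to the elementary relation between $\cot(\frac{1}{2}(\alpha-\alpha'))$ and $\frac{1}{\alpha-\alpha'}$, whose difference is again smooth), so that $K[w]\gamma$ is a sum of integral operators whose kernels are bounded — indeed, with the chord-arc hypothesis $w\in\mathcal{C}^s_b$, $b>0$, the quantity $\frac{\alpha-\alpha'}{w(\alpha)-w(\alpha')}$ stays bounded away from $0$ and $\infty$, and the discrete derivative $\frac{w(\alpha)-w(\alpha')}{\alpha-\alpha'}-w_\alpha(\alpha)$ is controlled by $w_{\alpha\alpha}$.

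The key analytic tool is the standard calculus of such "smoothing" integral operators on the circle: an operator with kernel $k(\alpha,\alpha')$ that is $\mathcal{H}^{s-1}$ in $\alpha$ uniformly in $\alpha'$ (and similarly symmetric) maps $\mathcal{H}^1_{\per}$ into $\mathcal{H}^{s-1}_{\per}$, with norm controlled by the kernel's $\mathcal{H}^{s-1}$ seminorms. Here the kernels are built by composition from $w$, from $1/w_\alpha$, and from the entire functions $\zeta\mapsto\cot\zeta-\zeta^{-1}$ and $\zeta\mapsto(e^\zeta-1)^{-1}-\zeta^{-1}$ evaluated at $\frac{\pi}{M}(w(\alpha)-w(\alpha'))$. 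To get the $\mathcal{H}^{s-1}$ estimate I would differentiate $K[w]\gamma$ up to $s-1$ times in $\alpha$, each derivative either hitting $\gamma$ (then integrate by parts to move it back, using periodicity), hitting the explicit prefactor $1/w_\alpha$ (controlled by Lemma \ref{Z props}-type bounds, i.e. $\|1/w_\alpha\|_{\mathcal{H}^{s}_{\per}}$ from the chord-arc condition), or hitting the kernel, producing further compositions of $w$ and its derivatives with analytic functions; the exponential $\exp\{C\|w\|_{\mathcal{H}^s(0,2\pi)}\}$ enters precisely because composing an entire function with $w$ and then estimating in $\mathcal{H}^{s-1}$ via the Gagliardo–Nirenberg/Moser product rules produces a polynomial in $\|w\|_{\mathcal{H}^s}$ of degree growing with $s$, together with factors of $\|1/w_\alpha\|$ which are themselves exponentially bad in $\|w\|$ near the chord-arc threshold — bounding all of this crudely by an exponential is the cleanest route. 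Linearity in $\gamma$ gives the claimed dependence $\|\gamma\|_{\mathcal{H}^1_{\per}}$; one never needs more than one derivative of $\gamma$ because, after the Hilbert-transform piece is removed, every surviving kernel is at worst weakly singular and an integration by parts trades the lost derivative on $\gamma$ for a (bounded) derivative on the smooth kernel.

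Smoothness of $(w,\gamma)\mapsto K[w]\gamma$ as a map $\mathcal{C}^s_b\times\mathcal{H}^1_{\per}\to\mathcal{H}^{s-1}_{\per}$ then follows from the same structure: the map is linear (hence $C^\infty$) in $\gamma$, and in $w$ it is an explicit composition of the bounded multilinear "kernel integration" operation with the maps $w\mapsto w_\alpha$, $w\mapsto 1/w_\alpha$ (smooth on the open set $\mathcal{C}^s_b$, by the same argument as in Lemma \ref{Z props}), and $w\mapsto$ (composition of an entire function with the difference $w(\alpha)-w(\alpha')$), the latter smooth by the standard results on superposition operators in Sobolev spaces already invoked earlier in the paper. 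The main obstacle, and the step deserving real care, is the bookkeeping in the $\mathcal{H}^{s-1}$ estimate: one must verify that after subtracting the Hilbert-transform singularity the residual kernel genuinely gains a full derivative's worth of regularity (so that the operator lands in $\mathcal{H}^{s-1}$ and not merely $\mathcal{H}^{s-2}$), and that the chord-arc bound is quantitatively strong enough to control the inverse powers of $w(\alpha)-w(\alpha')$ that appear when the kernel is differentiated — this is where the constant $b>0$ is used essentially and where the exponential dependence on $\|w\|_{\mathcal{H}^s}$ becomes unavoidable.
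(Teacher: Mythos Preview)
The paper does not actually prove this lemma: its entire proof reads ``See Lemma 3.5 of \cite{A}'' together with a remark that related results appear in \cite{BHL}. Your sketch is essentially the standard argument that one finds in those references --- subtract the leading Hilbert-transform singularity from the Birkhoff--Rott kernel, verify (via the chord-arc bound) that the residual kernel is bounded and in fact has $s-1$ derivatives controlled by $\|w\|_{\H^s}$, and then read off the mapping property and the quantitative estimate. So your approach is correct and matches what the cited proof does.

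One small imprecision worth flagging: the auxiliary functions you call ``entire,'' e.g.\ $\zeta\mapsto\cot\zeta-\zeta^{-1}$, are not entire --- they are meromorphic with poles at $k\pi$, $k\neq 0$. This does not damage the argument, since the chord-arc condition together with the $\H^s$ bound on $w$ confines $\tfrac{\pi}{M}(w(\alpha)-w(\alpha'))$ to a compact region avoiding those poles, and analyticity there is all that the composition estimates require; but you should say ``analytic on the relevant domain'' rather than ``entire.''
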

\begin{proof}  See Lemma 3.5 of \cite{A}. We mention that related lemmas can be found elsewhere in the 
literature, such as in \cite{BHL}.
 \end{proof}

The set $\mathcal{C}_b^s$ is the open subset of $\H^s_M$ of functions whose graphs satisfy the ``chord-arc" condition. 
{\it This condition precludes self-intersection of the graph.} 
Note that this is true
even in the case where $b = 0$ since we have selected the strict inequality in the definition. 
Of course if $b>0$, membership of $w$ in this set $\mathcal{C}_b^s$ 
implies that $|w_\alpha(\alpha)| \ge b$ \ for all $\alpha$. 

Note that $H\gamma$ is real because $\gamma$ is real-valued.   
Also note that the definition of $\widetilde{T}[\theta]$ implies that 
$\widetilde{T}[\theta]/\partial_\alpha \widetilde{Z}[\theta] = 1/|\partial_\alpha \widetilde{Z}[\theta] |$ is also real. 
Thus 
\begin{equation}\label{WT is smooth}\begin{split}
\Re \left( (B[ \widetilde{Z}[\theta] ]\gamma\ \widetilde{T}[\theta]\right) 
&=\Re\left(  \left( K[\widetilde{Z}[\theta]]\gamma\right) \widetilde{T}[\theta] \right) + \Re\left(  \left( {1 \over 2  i \partial_\alpha \widetilde{Z}[\theta]} H \gamma  \right)
 \widetilde{T}[\theta] \right)\\
 &= \Re\left(  \left( K[\widetilde{Z}[\theta] ]\gamma\right) \widetilde{T}[\theta] \right) 
\end{split}
\end{equation}
and similarly
\begin{equation}\label{WN is not smooth}\begin{split}
\Re ( (B[ \widetilde{Z}[\theta] ]\gamma\ \widetilde{N}[\theta]) 
={1 \over 2   |\partial_\alpha \widetilde{Z}[\theta]|} H \gamma  
+ \Re(   K[\widetilde{Z}[\theta] ]\gamma\ \widetilde{N}[\theta] ) .
\end{split}
\end{equation}
Therefore, counting derivatives and applying Lemmas \ref{Z props} and  \ref{hilbert plus smooth}, 
we directly obtain the following regularity.  
\begin{cor} 
Let $s,s_1 \ge1$, $b > 0, h>0$ and 
$$
\U^s_{b,h}:= \left\{ \theta \in \U^s_h : \widetilde{Z}[\theta] \in \mathcal{C}^{s+1}_b \right \}.
$$
Then the mappings 
$(\theta,\gamma) \to B[ \widetilde{Z}[\theta] ]\gamma$ 
and $(\theta,\gamma) \to \Re( B[ \widetilde{Z}[\theta] ]\gamma\ \widetilde{N}[\theta] )$
are  smooth  from $\U^s_{b,h} \times \H^{s_1}_\text{per}$ into $\H^{\min\left\{s,s_1\right\}}_\text{per}$.  
Furthermore, 
$\Re( B[ \widetilde{Z}[\theta] ]\gamma\ \widetilde{T}[\theta] )$ 
is a smooth map from  $\U^s_{b,h} \times \H^1_\text{per}$
into $\H^{s}_\text{per}$.
\end{cor}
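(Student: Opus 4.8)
The plan is to chain Lemmas~\ref{Z props} and~\ref{hilbert plus smooth} together with the splitting $B[w]\gamma=\frac{1}{2iw_\alpha}H\gamma+K[w]\gamma$ built into the definition of $K$ and with the two identities \eqref{WT is smooth} and \eqref{WN is not smooth}; no new estimate is required. Throughout I will use freely: a bounded multilinear map between Banach spaces is $C^\infty$; a composition of $C^\infty$ maps is $C^\infty$; for $\sigma\ge1$ the space $\H^\sigma_\per$ is a Banach algebra, so for $\sigma_1,\sigma_2\ge1$ pointwise multiplication is bounded bilinear from $\H^{\sigma_1}_\per\times\H^{\sigma_2}_\per$ into $\H^{\min\{\sigma_1,\sigma_2\}}_\per$ (embed both factors into $\H^{\min\{\sigma_1,\sigma_2\}}_\per$ and use the algebra property), reciprocation $f\mapsto 1/f$ is $C^\infty$ on the open set of functions bounded away from $0$, and $f\mapsto\Re f$ is bounded linear; and $H$ is bounded linear, hence $C^\infty$, on $\H^\sigma_\per$ for every $\sigma\ge0$.

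First I would record the building blocks on $\U^s_{b,h}$. This set is open: $\U^s_h$ is open in $\H^s_\per$, $\mathcal{C}^{s+1}_b$ is open in $\H^{s+1}_M$, and $\widetilde{Z}$ is continuous between them by Lemma~\ref{Z props}, so $\U^s_{b,h}=\U^s_h\cap\widetilde{Z}^{-1}(\mathcal{C}^{s+1}_b)$ is open. By Lemma~\ref{Z props} the maps $\theta\mapsto\widetilde{Z}[\theta]$ (into $\H^{s+1}_M$, and into $\mathcal{C}^{s+1}_b$ on $\U^s_{b,h}$), $\theta\mapsto\widetilde{N}[\theta]$, and $\theta\mapsto\widetilde{T}[\theta]$ (the latter two into $\H^s_\per$) are $C^\infty$; and since $\partial_\alpha\widetilde{Z}[\theta]=\frac{M}{2\pi\overline{\cos\theta}}(e^{i\theta}-i\overline{\sin\theta})$ depends smoothly on $\theta$ and is bounded away from $0$ on $\U^s_{b,h}$ (proof of Lemma~\ref{Z props}), both $\theta\mapsto 1/\partial_\alpha\widetilde{Z}[\theta]$ and $\theta\mapsto 1/|\partial_\alpha\widetilde{Z}[\theta]|=\widetilde{T}[\theta]/\partial_\alpha\widetilde{Z}[\theta]$ are $C^\infty$ into $\H^s_\per$. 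Composing $\theta\mapsto\widetilde{Z}[\theta]\in\mathcal{C}^{s+1}_b$ with the $C^\infty$ map $(w,\gamma)\mapsto K[w]\gamma$ of Lemma~\ref{hilbert plus smooth} (taking its index ``$s$'' to be $s+1\ge2$) gives that $(\theta,\gamma)\mapsto K[\widetilde{Z}[\theta]]\gamma$ is $C^\infty$ from $\U^s_{b,h}\times\H^1_\per$ into $\H^s_\per$, hence also from $\U^s_{b,h}\times\H^{s_1}_\per$ into $\H^s_\per$ because $\H^{s_1}_\per\hookrightarrow\H^1_\per$.

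Then I would assemble the three assertions. For $B[\widetilde{Z}[\theta]]\gamma=\tfrac{1}{2i}\bigl(1/\partial_\alpha\widetilde{Z}[\theta]\bigr)H\gamma+K[\widetilde{Z}[\theta]]\gamma$, the first term is a product of the $C^\infty$ $\H^s_\per$-valued map $\theta\mapsto 1/\partial_\alpha\widetilde{Z}[\theta]$ and the $C^\infty$ $\H^{s_1}_\per$-valued map $\gamma\mapsto H\gamma$, hence $C^\infty$ into $\H^{\min\{s,s_1\}}_\per$; the second term is $C^\infty$ into $\H^s_\per\hookrightarrow\H^{\min\{s,s_1\}}_\per$. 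For $\Re(B[\widetilde{Z}[\theta]]\gamma\,\widetilde{N}[\theta])$ I would use \eqref{WN is not smooth}: the term $\tfrac12\bigl(1/|\partial_\alpha\widetilde{Z}[\theta]|\bigr)H\gamma$ is handled exactly like the Hilbert term above, giving $\H^{\min\{s,s_1\}}_\per$, and $\Re\bigl(K[\widetilde{Z}[\theta]]\gamma\,\widetilde{N}[\theta]\bigr)$ is the real part of a product of two $C^\infty$ $\H^s_\per$-valued maps, hence $C^\infty$ into $\H^s_\per\hookrightarrow\H^{\min\{s,s_1\}}_\per$. For $\Re(B[\widetilde{Z}[\theta]]\gamma\,\widetilde{T}[\theta])$ I would use \eqref{WT is smooth}, by which the Hilbert term cancels, leaving $\Re\bigl(K[\widetilde{Z}[\theta]]\gamma\,\widetilde{T}[\theta]\bigr)$; this is the real part of a product of the $C^\infty$ $\H^s_\per$-valued maps $(\theta,\gamma)\mapsto K[\widetilde{Z}[\theta]]\gamma$ (which needs only $\gamma\in\H^1_\per$) and $\theta\mapsto\widetilde{T}[\theta]$, hence $C^\infty$ from $\U^s_{b,h}\times\H^1_\per$ into $\H^s_\per$.

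I do not expect a serious obstacle, since the two substantive estimates are already isolated in Lemmas~\ref{Z props} and~\ref{hilbert plus smooth}; the only thing needing care is the bookkeeping of derivative counts. The key point is to see why the tangential component is one derivative smoother than one might fear: the non-smoothing piece $\tfrac{1}{2iw_\alpha}H\gamma$ is purely imaginary against the real factor $\widetilde{T}[\theta]/\partial_\alpha\widetilde{Z}[\theta]$ and so drops out via \eqref{WT is smooth}, leaving only the regularizing remainder $K$, which Lemma~\ref{hilbert plus smooth} places in $\H^s_\per$; for $B[\widetilde{Z}[\theta]]\gamma$ itself and for its normal component the Hilbert term persists and limits the regularity to $\H^{\min\{s,s_1\}}_\per$.
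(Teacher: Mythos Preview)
Your proof is correct and follows exactly the route the paper indicates: it chains Lemmas~\ref{Z props} and~\ref{hilbert plus smooth} via the decomposition $B=\tfrac{1}{2iw_\alpha}H+K$ and the identities \eqref{WT is smooth}--\eqref{WN is not smooth}, with the paper's own argument consisting only of the sentence ``counting derivatives and applying Lemmas~\ref{Z props} and~\ref{hilbert plus smooth}.'' You have simply supplied the bookkeeping the paper omits, including the key observation that the Hilbert term drops out of the tangential component.
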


\begin{cor}    \label{zero mean}
$\widetilde{\Phi}(\theta,\gamma;c)$ is a smooth map from $\U^1_{b,h} \times \H^1_\text{per} \times \R$ into
$L^2_\text{per,0}:=\H^0_{\per,0} $.
\end{cor}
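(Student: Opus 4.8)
\emph{Proof proposal.} The plan is to recognize $\widetilde{\Phi}$ as a fixed algebraic combination of maps that are already known to be smooth, and then to check that each summand manifestly has mean zero over a period. I would begin with the elementary building blocks. The Nemytskii operators $\theta \mapsto \cos\theta$ and $\theta \mapsto \sin\theta$ are smooth from $\H^1_\per$ into itself, because $\H^1_\per$ is a Banach algebra which embeds continuously in $C^0$ and $\cos,\sin$ are entire, so the substitution results already invoked in Lemma \ref{Z props} apply. Composing with the bounded linear functional $f \mapsto \tfrac{1}{2\pi}\int_0^{2\pi} f\,d\alpha$ shows that $\overline{\cos\theta}$ and $\overline{\sin\theta}$ are smooth real-valued functions on $\H^1_\per$. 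On $\U^1_{b,h}$ one has $\overline{\cos\theta}>0$, so (smoothness being a local property, or globally if $h>0$) the reciprocal $1/\overline{\cos\theta}$, and hence the scalar coefficients $\tfrac{\pi\overline{\cos\theta}}{2M}$, $\tfrac{gM}{\pi\overline{\cos\theta}}$ and $\tfrac{M}{2\pi\overline{\cos\theta}}$, are smooth functions on $\U^1_{b,h}$.

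Next I would assemble the $\H^1_\per$-valued pieces. By the preceding corollary, with $s=1$, the map $(\theta,\gamma)\mapsto \Re\!\big(B[\widetilde{Z}[\theta]]\gamma\,\widetilde{T}[\theta]\big)$ is smooth from $\U^1_{b,h}\times\H^1_\per$ into $\H^1_\per$; combined with the first paragraph and the bounded (hence smooth) bilinear scalar multiplication $\R\times\H^1_\per\to\H^1_\per$, this makes $\psi:=c\cos\theta-\Re\!\big(B[\widetilde{Z}[\theta]]\gamma\,\widetilde{T}[\theta]\big)$ a smooth $\H^1_\per$-valued map on $\U^1_{b,h}\times\H^1_\per\times\R$. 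Since multiplication $\H^1_\per\times\H^1_\per\to\H^1_\per$ is bounded bilinear, the products $\psi\gamma$, $\gamma^2$ and $\psi^2$ are smooth $\H^1_\per$-valued maps as well. Applying $\partial_\alpha$, which is bounded and linear (hence smooth) from $\H^1_\per$ to $\H^0_\per$ and sends every periodic $\H^1$ function to a mean-zero $L^2$ function because $\int_0^{2\pi}\partial_\alpha f\,d\alpha=0$, I get that $\partial_\alpha(\psi\gamma)$, $\partial_\alpha(\gamma^2)$ and $\partial_\alpha(\psi^2)$ are smooth maps into $\H^0_{\per,0}=L^2_{\per,0}$. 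The one term with no derivative, $\tfrac{gM}{\pi\overline{\cos\theta}}\big(\sin\theta-\overline{\sin\theta}\big)$, is the product of the smooth scalar $\tfrac{gM}{\pi\overline{\cos\theta}}$ with the smooth $\H^1_\per$-valued map $\theta\mapsto\sin\theta-\overline{\sin\theta}$, which has mean zero by construction; since $\H^1_\per\hookrightarrow L^2_\per$ continuously, this is a smooth map into $L^2_{\per,0}$. Finally $\widetilde{\Phi}$ is a linear combination of these smooth $L^2_{\per,0}$-valued maps with coefficients that are either the constants $1/\tau$, $A/\tau$ or the smooth scalars of the first paragraph; multiplying a smooth $\R$-valued map by a smooth $L^2_{\per,0}$-valued map yields a smooth $L^2_{\per,0}$-valued map, and finite sums preserve this, so $\widetilde{\Phi}\colon\U^1_{b,h}\times\H^1_\per\times\R\to L^2_{\per,0}$ is smooth.

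I expect the only substantive analytic input to be the smoothness of the Birkhoff-Rott composition $\Re\!\big(B[\widetilde{Z}[\theta]]\gamma\,\widetilde{T}[\theta]\big)$ into $\H^1_\per$, and in particular the gain of one derivative coming from the cancellation \eqref{WT is smooth}, whereby the purely imaginary Hilbert-transform piece drops out of the real part; this is exactly what the preceding corollary records, resting on Lemma \ref{hilbert plus smooth} and Lemma \ref{Z props}. Everything else is bookkeeping: Banach-algebra estimates for the products, boundedness of $\partial_\alpha$ between the relevant Sobolev spaces, and the observation that each summand of $\widetilde{\Phi}$ is either an exact $\alpha$-derivative or a scalar multiple of $\sin\theta-\overline{\sin\theta}$. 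Consequently I do not anticipate a genuine obstacle beyond carefully tracking derivative counts, which the hypotheses $\theta\in\H^1_\per$, $\gamma\in\H^1_\per$ and $\theta\in\U^1_{b,h}$ are precisely tailored to accommodate.
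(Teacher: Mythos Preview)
Your proposal is correct and follows essentially the same approach as the paper's own proof: invoke the preceding corollary for smoothness of the Birkhoff--Rott composition into $\H^1_\per$, assemble the remaining terms via Banach-algebra products and the bounded operator $\partial_\alpha$, and then observe that every summand of $\widetilde{\Phi}$ is either an exact $\alpha$-derivative (hence mean zero) or a scalar multiple of $\sin\theta-\overline{\sin\theta}$ (mean zero by construction). The paper's proof is just a terser rendition of the same argument.
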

\begin{proof} The fact that $\widetilde{\Phi}(\theta,\gamma;c)$ is a smooth map from $\U^1_{b,h} \times \H^1_\text{per} \times \R$
into $L^2_\per$ follows from the previous corollary and the definition of $\widetilde{\Phi}$. Examination of the terms in $\widetilde{\Phi}$ shows that all but one is a perfect derivative, and
thus will have mean value zero on $[0,2\pi]$. The remaining term is a constant times 
$\sin\theta   - \overline{\sin\theta}$, which also has mean zero. 
Thus  $\widetilde{\Phi} \in L^2_{\per,0}.$
\end{proof}

We introduce the ``inverse" operator
$$
\paa f (\alpha):= \int_0^\alpha \int_0^a f(s) ds da - {\alpha  \over 2 \pi} \int_0^{2 \pi} \int_0^a f(s) ds da,      $$
which is bounded from $H^s_{per,0}$ to $H^{s+2}_{per}$.  
Indeed, it is obvious that $\partial_\alpha^2 (\paa f)= f$, 
so we only need to demonstrate the periodicity of $\paa f$ for any $f\in H^s_{per,0}$.  
To this end, we compute  
\begin{equation*}
\begin{split}
\paa f(\alpha+2\pi)&=  \int_0^{\alpha + 2\pi} \int_0^a f(s) ds da - {\alpha  + 2 \pi \over 2 \pi} \int_0^{2 \pi} \int_0^a f(s) ds da 
\\&=
 \int_{2 \pi}^{\alpha + 2\pi} \int_0^a f(s) ds da 
- {\alpha  \over 2 \pi} \int_0^{2 \pi} \int_0^a f(s) ds da. 
\end{split}
\end{equation*}
\begin{equation*}
\begin{split}
= \int_{0}^{ \alpha} \int_0^{b} f(s) ds db 
- {\alpha  \over 2 \pi} \int_0^{2 \pi} \int_0^a f(s) ds da = \paa f(\alpha).
\end{split}
\end{equation*}

\subsection{Final reformulation}

Using \eqref{WN is not smooth} in the first equation of $\eqref{TW v4}$  yields  the equation  
$$
H \gamma + 2 \left \vert \partial_\alpha \widetilde{Z}[\theta] \right \vert 
\Re \left( ( K[ \widetilde{Z}[\theta]  ] \gamma)\widetilde{N}[\theta]  \right) +  2 c \left \vert \partial_\alpha \widetilde{Z}[\theta] \right \vert \sin\theta  = 0.   $$
It will be helpful to break $\gamma$ up into the sum of its average value and a mean zero piece, so we let 
$$
  \gamma_1:=\gamma - \overline\gamma .   $$
Applying $H$ to both sides and using $H^2 \gamma  = -\gamma + \overline\gamma = -\gamma_1$, we obtain 
\begin{equation}\label{gamma 1 eqn}
 \gamma_1 - H \left \{      2  \left \vert \partial_\alpha \widetilde{Z}[\theta] \right \vert 
\Re \left( ( K[ \widetilde{Z}[\theta]  ]  (\overline\gamma  + \gamma_1) ) \widetilde{N}[\theta] \right) +  2 c \left \vert \partial_\alpha \widetilde{Z}[\theta] \right \vert \sin\theta    \right\}  = 0.
\end{equation}
 It will turn out that we are free to specify $\overline\gamma $ in advance, and so henceforth we will view $\overline\gamma $ as a constant in the equations, akin to $g$, $M$, $A$ or $\tau$. 

Now one of the equations we wish to solve is $\theta_{\alpha \alpha} + \widetilde{\Phi}(\theta,\gamma;c)=0$. 
We use $\paa$  to ``solve" this equation for $\theta$.
Keeping in mind that $\gamma = \overline\gamma  + \gamma_1$, we define   
\begin{equation}  \label{captheta}
\Theta(\theta,\gamma_1;c): = - \paa \widetilde{\Phi}(\theta,\overline\gamma +\gamma_1;c) .  
\end{equation}
Then 
the second equation in \eqref{TW v4}  is equivalent to  
$\ds  \theta - \Theta(\theta,\gamma_1;c) = 0  $.   
					Knowing 
that $\theta = \Theta$, we are also free to rewrite \eqref{gamma 1 eqn} as 
$\gamma_1- \Gamma (\theta,\gamma_1;c) = 0$, 
where
\begin{multline}       \label{Gamma}
\Gamma (\theta,\gamma_1;c) \\:=  2  H \left \{ 
\left\vert \partial_\alpha \widetilde{Z}[\Theta(\theta,\gamma_1;c)] \right \vert 
\Re \left( \left(K\left[ \widetilde{Z}[\Theta(\theta,\gamma_1;c)] \right](\overline\gamma +\gamma_1) \right) 
\widetilde{N}[\Theta(\theta,\gamma_1;c)]  \right)
\right. \\ \left. + c\left \vert \partial_\alpha \widetilde{Z}[\Theta(\theta,\gamma_1;c)] \right \vert \sin(\Theta(\theta,\gamma_1;c))           \right\}.
\end{multline}

Summarizing, our equations now have the form 
\begin{equation}   \label{TW v5}
\ds  \theta - \Theta(\theta,\gamma_1;c) = 0, \quad   \gamma_1- \Gamma (\theta,\gamma_1;c) = 0
\end{equation}
					The set where the solutions will be situated is  $\U=\U_{0,0}$ where 
\begin{multline} \label{solution set}
\U_{b,h}:=
\left\{ (\theta,\gamma_1;c) \in \H^1_\per \times \H^1_{\per,0} \times\R: \right. \\     \left.
\theta \text{ is odd, } \gamma_1 \text { is even, } 
 \overline{\cos\theta} > h, 
\widetilde{Z}[\theta] \in \CA^2_b \text{ and }  \widetilde{Z}[\Theta(\theta,\gamma_1;c)] \in \CA^{3}_b
\right\}.  
\end{multline}
These sets are given the topology of $\H^1_\per \times \H^1_{\per,0} \times\R$.  
Note that they are defined so that the functions have one derivative.  The following theorem states 
that $\Theta$ and $\Gamma$ gain an extra derivative.  
\begin{theorem}\label{TWE5}{\bf (``Identity plus compact" formulation)}
For all $b, h>0$, the pair $(\Theta,\Gamma )$ is a compact map from $\U_{b,h}$ into $\H^2_{\per,\text{odd}} \times \H^2_{\per,0,\text{even}}$
and is smooth from $\U$ into $\H^2_{\per,\text{odd}} \times \H^2_{\per,0,\text{even}}$.
If $(\theta,\gamma_1;c) \in \U$ solves  \eqref{TW v5}, 
then the pair 
 $(\widetilde{Z}[\theta](\alpha)+ct,\overline\gamma +\gamma_1(\alpha,t))$ is a spatially periodic, 
 symmetric traveling wave solution of $\eqref{z eqn}$ and $\eqref{gamma eqn v1}$ with speed $c$ and period $M$.
 \end{theorem}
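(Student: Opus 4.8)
The plan is to verify the three assertions of Theorem~\ref{TWE5} in turn: smoothness of $(\Theta,\Gamma)$ on $\U$, compactness on each $\U_{b,h}$, and the fact that a solution of \eqref{TW v5} yields a genuine spatially periodic symmetric traveling wave. The smoothness claim is essentially bookkeeping: by Corollary~\ref{zero mean}, $\widetilde\Phi$ is a smooth map from $\U^1_{b,h}\times\H^1_\per\times\R$ into $L^2_{\per,0}$, and $\paa$ is bounded (hence smooth, being linear) from $\H^0_{\per,0}$ into $\H^2_\per$; composing gives that $\Theta=-\paa\widetilde\Phi(\theta,\overline\gamma+\gamma_1;c)$ is smooth into $\H^2_\per$. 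For $\Gamma$, I would note that $\Theta$ lands in $\H^2_\per$ with $\widetilde Z[\Theta]\in\CA^3_b$ (by definition of $\U$), so Lemma~\ref{Z props} and Lemma~\ref{hilbert plus smooth} apply to the inner expressions, $K[\widetilde Z[\Theta]](\overline\gamma+\gamma_1)$ is smooth into $\H^2_\per$, multiplying by $|\partial_\alpha\widetilde Z[\Theta]|\in\H^2_\per$ and $\widetilde N[\Theta]\in\H^2_\per$ stays in $\H^2_\per$ (which is a Banach algebra), and finally $H$ is bounded on $\H^2_\per$; the $\sin\Theta$ term is handled the same way via composition. The oddness/evenness assertions follow from tracking parity through the formulas: $\theta$ odd forces $\cos\theta$, $|\partial_\alpha\widetilde Z[\theta]|$ even and $\sin\theta$, $\widetilde N[\theta]$ odd, $H$ interchanges parity, $\paa$ preserves it appropriately, so $\Theta$ is odd and $\Gamma$ is even.

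The compactness claim is the main point and the one requiring the extra-derivative gain. The key observation is that $(\Theta,\Gamma)$ maps $\U_{b,h}$ not just continuously but into $\H^2_\per\times\H^2_{\per,0}$, whereas the topology on $\U_{b,h}$ is that of $\H^1_\per\times\H^1_{\per,0}\times\R$; since the embedding $\H^2_\per\hookrightarrow\H^1_\per$ is compact (Rellich), it suffices to show that $(\Theta,\Gamma)$ is \emph{bounded} as a map into $\H^2_\per\times\H^2_{\per,0}$ on $\H^1$-bounded subsets of $\U_{b,h}$, together with continuity. For this I would use the uniform estimate \eqref{Z estimate} of Lemma~\ref{Z props} (valid for $h>0$) and the exponential bound in Lemma~\ref{hilbert plus smooth} (valid for $b>0$): these give $\|K[\widetilde Z[\Theta]](\overline\gamma+\gamma_1)\|_{\H^2_\per}\le C\|\overline\gamma+\gamma_1\|_{\H^1_\per}\exp\{C\|\widetilde Z[\Theta]\|_{\H^3(0,2\pi)}\}$, and $\|\widetilde Z[\Theta]\|_{\H^3(0,2\pi)}\le C(1+\|\Theta\|_{\H^2_\per})\le C(1+\|\widetilde\Phi(\theta,\overline\gamma+\gamma_1;c)\|_{L^2_\per})$. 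One then has to check that $\|\widetilde\Phi\|_{L^2}$ is controlled by the $\H^1$-norm of $(\theta,\gamma_1)$ and $|c|$ on $\U_{b,h}$ — here the point is that $\widetilde\Phi$ involves at worst one $\alpha$-derivative of quantities like $\gamma^2$, $c\cos\theta-\Re(B[\widetilde Z[\theta]]\gamma\,\widetilde T[\theta])$ squared, and $\sin\theta$, and by \eqref{WT is smooth} the Birkhoff--Rott term against $\widetilde T$ reduces to the smooth remainder $K$, so no Hilbert transform of a derivative ever appears; the chord-arc constant $b>0$ and $h>0$ make all the denominators uniformly bounded below. Assembling these, $(\Theta,\Gamma)$ sends $\H^1$-bounded subsets of $\U_{b,h}$ into $\H^2$-bounded sets, and Rellich finishes compactness; continuity comes from the already-established smoothness on $\U\supset\U_{b,h}$.

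For the final assertion I would simply observe that \eqref{TW v5} is, by construction, equivalent to \eqref{TW v4}: $\theta=\Theta(\theta,\gamma_1;c)$ means $\theta=-\paa\widetilde\Phi(\theta,\gamma;c)$, and applying $\partial_\alpha^2$ gives $\theta_{\alpha\alpha}+\widetilde\Phi(\theta,\gamma;c)=0$ (using $\partial_\alpha^2\paa=\mathrm{id}$); conversely that second-order equation together with the oddness of $\theta$ (which pins down the two integration constants, since $\paa f$ is the unique odd $\H^2_\per$ preimage of an even $f\in L^2_{\per,0}$ — or rather the constants are fixed by the structure of $\paa$) recovers $\theta=\Theta$. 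Similarly $\gamma_1=\Gamma(\theta,\gamma_1;c)$ unwinds, via $H^2\gamma=-\gamma_1$ and the identity $\theta=\Theta$, to \eqref{gamma 1 eqn}, which by \eqref{WN is not smooth} is exactly the first equation of \eqref{TW v4} after applying $H$; since the normal-velocity equation has mean zero by Lemma~\ref{incompressibility}, no information is lost in applying $H$ (one uses $H^2=-\mathrm{id}+\text{mean}$ and that the original equation's mean vanishes). With \eqref{TW v4} in hand, Proposition~\ref{TWE2} delivers the spatially periodic traveling wave with speed $c$ and period $M$; symmetry is the parity statement just proved. The main obstacle is the compactness step — specifically, making sure every term of $\widetilde\Phi$ and of $\Gamma$ genuinely gains regularity rather than merely preserving it, which is exactly where the cancellation \eqref{WT is smooth} of the leading Hilbert-transform singularity against $\widetilde T$, and the smoothing by $\paa$ and by $H$ applied only to already-regular arguments, are essential; the uniformity in $b,h>0$ of the constants in Lemmas~\ref{Z props} and~\ref{hilbert plus smooth} is what converts this into the required boundedness on bounded sets.
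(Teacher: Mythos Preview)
Your proposal is correct and follows essentially the same approach as the paper: smoothness via Corollary~\ref{zero mean} and the mapping properties of $\paa$, $H$, and $K$; compactness from the one-derivative gain together with the uniform bounds of Lemmas~\ref{Z props} and~\ref{hilbert plus smooth} and the Rellich embedding $\H^2_\per\hookrightarrow\H^1_\per$; and the traveling-wave conclusion via the equivalence of \eqref{TW v5} with \eqref{TW v4} and Proposition~\ref{TWE2}. Your write-up is in fact more explicit than the paper's own proof, which leaves the boundedness verification, the unwinding of \eqref{TW v5} to \eqref{TW v4}, and the parity check entirely to the reader.
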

\begin{proof}  
Observe that from the results of the previous section, 
$\Theta(\theta,\gamma_1;c)$ is a smooth map from $\U^1_{b,h} \times \H^1_{\per,0} \times \R$ into $\H^2_\per$ 
for any  $b,h > 0$.  
Careful unraveling of the definitions shows that  
$\Gamma (\theta,\gamma_1;c)$ is a smooth map from the set 
$\{ (\theta,\gamma_1;c) \in \U^1_{b,h} \times \H^1_{\per,0}  \times \R:
\widetilde{Z}[\Theta(\theta,\gamma_1;c)] \in \CA^{3}_b   \}$ 
into $\H^2_{\per,0}$. These facts, together with the uniform bound for fixed $b>0$ on the remainder operator $K$ in Lemma \ref{hilbert plus smooth}, imply that 
the mapping $(\Theta,\Gamma)$ is compact from $\U_{b,h}$ into  $\H^2_{\per,\text{odd}} \times \H^2_{\per,0,\text{even}}$, since $\H^2_{\per}$ is compactly embedded in $\H^1_{\per}$.
We conclude that $\Theta$ and $\Gamma $ are also smooth maps, but not necessarily compact, on the 
union of the previous sets over all $b,h>0$, which is to say 
that $(\Theta,\Gamma)$ is smooth on $\U$.  
The second statement in the theorem is obvious from the previous discussion.  
Finally, the subscripts ``odd" and ``even" in the target space for $(\Theta,\Gamma)$ above simply denote the subspaces which consist of odd and even functions.
That $(\Theta,\Gamma)$ preserves this symmetry can be directly checked from its definition; the computation is not short, but neither is it difficult. So we omit it.
\end{proof}

 \section{Global Bifurcation}
 \subsection {General considerations}\label{gen con}
Our basic tool is the following global bifurcation theorem, which is based on the use of Leray-Schauder degree.  
It is fundamentally due to Rabinowitz \cite{R} and was later generalized by Kielh\"ofer \cite{K}.  
\begin{theorem} \label{glbif} {\bf (general bifurcation theorem)}  
Let $X$ be a Banach space and  $U$ be an open subset of $X\times\mathbf{R}$.
Let $F$ map $U$ continuously into $X$.  
Assume that 
\begin{enumerate} [(a)] 
\item the Frechet derivative  $D_{\xi}F(0,\cdot)$ belongs to $C(\mathbf{R},L(X,X))$,   
\item  the mapping $(\xi,c)\to F(\xi,c)-\xi$ is compact from $X\times\R$ into $X$,   
and  \item  $F(0,c_0)=0$ and $D_{x}F(0,c)$ has an odd crossing number   at $c=c_{0}.$   
\end{enumerate} 
Let 
$\mathcal{S}$ denote the closure of the set of nontrivial solutions of $F(\xi,c)=0$ in 
$X\times\mathbf{R}.$  
Let $\mathcal{C}$ denotee the
connected component of $\mathcal{S}$ to which $(0,c_{0})$ belongs.  
Then one of the following alternatives is valid: 
\begin{enumerate}[(i)]
\item $\mathcal{C}$ is unbounded; or 
\item $\mathcal{C}$ contains a point $(0,c_{1})$ where $c_{0}\neq c_{1}$; or 
\item $\mathcal{C}$ contains a point on the boundary of $U.$
\end{enumerate}
\end{theorem}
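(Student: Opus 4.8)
This is a known result: it is due to Rabinowitz \cite{R} in the case of a simple characteristic value and was put in essentially the present abstract form by Kielh\"ofer \cite{K}. The plan is therefore to invoke \cite{R} and \cite{K}, but for orientation I sketch the Leray--Schauder degree argument that underlies it. By hypothesis (b), each $F(\cdot,c)$ is a compact perturbation of the identity, so whenever $\mathcal{O}_c \subset X$ is open and bounded and $0 \notin F(\partial \mathcal{O}_c, c)$ the degree $\deg(F(\cdot,c), \mathcal{O}_c, 0)$ is defined and is invariant under admissible homotopies; in particular it is unchanged as $c$ varies, provided no solution of $F(\cdot,c)=0$ reaches the boundary. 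The hypothesis that $D_\xi F(0,c)$ has an odd crossing number at $c_0$ says precisely that this operator is an isomorphism for $0 < |c-c_0| < \epsilon$, so that $\xi = 0$ is then an isolated zero, and that its Leray--Schauder index $i(c) := \deg(F(\cdot,c), B_\rho(0), 0)$ (for small $\rho$) jumps by an odd, hence nonzero, integer as $c$ crosses $c_0$.

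First I would deduce local bifurcation from $(0,c_0)$ by a standard excision argument: if $(0,c_0)$ were \emph{not} a bifurcation point, then for small $\rho$ and $\epsilon$ the only zero of $F$ in $B_\rho(0) \times [c_0-\epsilon, c_0+\epsilon]$ would be the trivial one, and homotopy invariance in $c$ would force $i(c_0-\epsilon) = i(c_0+\epsilon)$, contradicting the crossing-number hypothesis. Hence the component $\mathcal{C}$ of $\mathcal{S}$ through $(0,c_0)$ is a genuine (nontrivial) continuum.

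For the global dichotomy I would argue by contradiction: suppose none of (i)--(iii) holds, so that $\mathcal{C}$ is bounded, stays away from $\partial U$, and contains no trivial point $(0,c_1)$ with $c_1 \neq c_0$. Because $F-\mathrm{id}$ is compact, any closed and bounded set of solutions is compact, so $\mathcal{C}$ is a compact continuum. A topological separation lemma (Whyburn's lemma: in a compact metric space the connected component of a point equals the intersection of all relatively closed-and-open sets containing it) then produces a bounded open $\mathcal{O} \subset U$ with $\mathcal{C} \subset \mathcal{O}$, $\overline{\mathcal{O}} \subset U$, $\partial \mathcal{O} \cap \mathcal{S} = \emptyset$, and with $\overline{\mathcal{O}}$ meeting the trivial branch $\{(0,c)\}$ only inside a short interval $[c_0-\epsilon, c_0+\epsilon]$. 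I would then compare the degrees of $F(\cdot,c)$ on the slices $\mathcal{O}_c := \{\xi : (\xi,c) \in \mathcal{O}\}$ at $c = c_0-\epsilon$ and $c = c_0+\epsilon$: since $\partial\mathcal{O}$ (in particular its lateral part) carries no solution, homotopy invariance along $c$ makes these two degrees equal, whereas excising a small ball about $\xi = 0$ in each slice and using the nonzero jump of the index across $c_0$ makes them unequal. This contradiction proves the theorem.

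The step I expect to be the main obstacle is the purely topological construction of $\mathcal{O}$: one must at once keep $\partial\mathcal{O}$ free of solutions, confine the intersection of $\mathcal{O}$ with the trivial branch to a small interval around $c_0$, and arrange that the end slices $\mathcal{O}_{c_0 \pm \epsilon}$ are legitimate domains for the degree. This is exactly Rabinowitz's separation argument, streamlined by Kielh\"ofer, and since it is carried out in full in \cite{R} and \cite{K}, I would simply cite those works rather than reproduce the details here.
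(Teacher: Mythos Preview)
Your proposal is correct and matches the paper's approach: the paper does not give a proof of this theorem at all, but simply remarks that it is Theorem~II.3.3 of \cite{K} when $U=X\times\R$, that the extension to a general open $U$ is ``practically identical,'' and that Rabinowitz's original version \cite{R} had $F(\xi,c)=\xi-cG(\xi)$. Your sketch of the underlying Leray--Schauder degree and Whyburn separation argument goes beyond what the paper provides, but is entirely consistent with the cited sources.
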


{The {\it crossing number} is the number of eigenvalues of $D_xF(0,c)$ that pass through 
$0$ as $c$ passes through~$c_0$.}  
In his original paper  \cite{R} Rabinowitz assumed that $F$ has the form 
$F(\xi,c) = \xi - cG(\xi)$.  Kielh\"offer's  book \cite{K} permits the general form as above.   
Theorem II.3.3 of \cite{K} states Theorem \ref{glbif} in the case that $U=X\times\R$.  
The proof of Theorem \ref{glbif} with an open set $U$ is practically identical to that in \cite{K}.  

We apply this theorem to our problem by fixing $b,h>0$ and setting   $U = \U_{b,h}$, $X=\H^1_{\per,\text{odd}} \times \H^1_{\per,0,\text{even}}$, $\xi = (\theta,\gamma_1)$, and 
$F(\xi, c)  = \xi - (\Theta(\xi,c),\Gamma(\xi,c))$.  Then
the problem laid out in Theorem \ref{TWE5} fits into the framework of this theorem. 
All we need to do is to choose $c_0$ so that the linearization has an odd crossing number when $c = c_0$.  
In fact, the simplest case with crossing number one will suffice.  

\subsection{Computation of the crossing number}
This calculation is difficult primarily due to the large number of terms we must differentiate. 
Thus we introduce some notation which will help to compress the calculations.
For any map $\mu(\theta,\gamma_1;c)$, 
we use $(\thetab, \gammab)$ to denote the direction of differentiation. To wit, we define:
\begin{multline}
\mu_0:=\mu(0,0;c) \quad \text{and}\\   
D\mu:= D_{\theta,\gamma_1} \mu(\theta,\gamma_1;c) \big\vert_{(0,0;c)} (\thetab,\gammab)   
:=\lim_{\ep \to 0} {1 \over \ep}\left( \mu(\ep \thetab,\ep \gammab;c)-\mu(0,0;c)\right).
\end{multline} 
We let $Q(\theta,\gamma_1) := (\overline\gamma  + \gamma_1)^2$, 
$Y(\theta) = \overline{\sin\theta}$, $\Sigma(\theta) = M / (2\pi \overline{\cos\theta})$, 
and 
$\widetilde{\mathsf{W}}^*[\theta,\gamma_1]:=  
B[\widetilde{Z}[\theta]](\overline\gamma  + \gamma_1)$.
It is to be understood that by $\sin$ and $\cos$ we mean the maps $\theta \to \sin\theta $ and $\theta \to \cos\theta $, respectively.
We will first compute the linearizations of $\Theta$ and $\Gamma $ 
while ignoring the restrictions to symmetric (even/odd) functions;  
of course, computation of the full linearization will restrict in a natural way to the 
linearization of the symmetric problem.

The following quantities and derivatives thereof are elementary.  
$$
\sin_0 = 0,  \quad \cos_0 = 1,\quad Q_0= \overline\gamma^2, \quad 
Y_0 = 0,  \quad \Sigma_0 =1,
\quad \widetilde{Z}_0= {M\over 2\pi} \alpha,
$$
$$
\quad (\partial_\alpha \widetilde{Z})_0= \left \vert \partial_\alpha \widetilde{Z} \right \vert_0 = {M \over 2\pi},
\quad
\widetilde{T}_0= 1, 
\quad
\widetilde{N}_0 = i
\quad
 \text{and} \quad
 \widetilde{\mathsf{W}}^*_0 = 0.$$
$$
D\sin  = \thetab, \quad D \cos = 0, \quad DQ = 2 \overline\gamma  \gammab,\quad
D Y  = {1 \over 2\pi} \int_0^{2 \pi} \thetab(a) da, \quad D \Sigma = 0,
$$
$$
D\widetilde{Z}  ={i M \over 2 \pi} \left(
\int_0^\alpha \thetab(a) da - {\alpha \over 2\pi} \int_0^{2\pi} \thetab(a) da 
\right),
\quad
D\partial_\alpha \widetilde{Z}={i M \over 2 \pi} \left(
\thetab  - {1 \over 2\pi} \int_0^{2\pi} \thetab(a) da 
\right),
$$
$$
D\left \vert \partial_\alpha \widetilde{Z}\right \vert = 0,\quad
D\widetilde{T}= i \left( \thetab - {1 \over 2\pi} \int_0^{2 \pi} \thetab(a) da \right)
\quad \text{and} \quad
D\widetilde{N} = -\thetab + {1 \over 2\pi} \int_0^{2 \pi} \thetab(a) da. 
$$

The computation of
 $D\widetilde{\mathsf{W}}^*$ is somewhat more complicated.
By the product and chain rules, 
\begin{equation}
\begin{split}
D\widetilde{\mathsf{W}}^*&= D\left[ B[\widetilde{Z}[\theta]] (\overline\gamma  + \gamma_1) \right] 
(\thetab, \gammab) \\
& ={1 \over 2  i M} \mathrm{PV} \int_0^{2 \pi}
D \left[( \overline\gamma  + \gamma_1(\alpha')) \cot \left( {\pi \over M} \left( \widetilde{Z}[\theta](\alpha) - \widetilde{Z}[\theta](\alpha')\right)\right) \right]d\alpha' 
\\
&={1 \over 2  i M} \mathrm{PV} \int_0^{2 \pi}
\gammab(\alpha') \cot \left( {\pi \over M} \left( \widetilde{Z}_0(\alpha) - \widetilde{Z}_0(\alpha')\right)\right)d\alpha' 
\\
&-{\pi \over 2  i M^2} \mathrm{PV} \int_0^{2 \pi}
 \overline\gamma  \csc^2 \left( {\pi \over M} \left( \widetilde{Z}_0(\alpha) - \widetilde{Z}_0(\alpha')\right)\right) 
 \left(D \widetilde{Z}(\alpha) - D \widetilde{Z}(\alpha') \right) (\thetab) \ d\alpha' .
\end{split}
\end{equation}
Now we use the fact that $\ds \widetilde{Z}_0(\alpha) = {M / 2 \pi}\alpha$ and the definition of $H$ to 
see that the first of the two terms above is exactly 
$\ds {(\pi / i M)} H \gammab.$ 
The second term $T_2$ is 
\begin{equation}
\begin{split}
&-{ \pi \overline\gamma  \over 2  i M^2} \mathrm{PV} \int_0^{2 \pi}
  \csc^2 \left( {1 \over 2} \left( \alpha - \alpha'\right)\right) \left(D\widetilde{Z}(\alpha) - D\widetilde{Z}(\alpha')\right)d\alpha' 
\\
 =&-{ \pi\overline\gamma    \over   i M^2} \mathrm{PV} \int_0^{2 \pi}
 \cot \left( {1 \over 2} \left( \alpha - \alpha'\right)\right)  {\partial \over \partial \alpha'} D
\widetilde{Z}(\alpha')d\alpha'    
 =-{2 \pi^2 \overline\gamma  \over i M^2} H  \left( \partial_\alpha D\widetilde{Z} \right)
 \end{split}
\end{equation}
But 
$$\ds
 { \partial_\alpha} D\tilde{Z}
 =
 {i M \over 2 \pi} \left( \thetab(\alpha') -{1 \over 2\pi} \int_0^{2\pi} \thetab(a) da \right).
$$
Since $H$ annihilates constants,  the second term $T_2$ is equal to
$-\ds 
({\pi \overline\gamma  / M}) H \thetab.
$
Thus  
\begin{equation}  \label{DW}
D\widetilde{\mathsf W}^*= {\pi \over i M} H \gammab - { \pi \overline\gamma  \over  M} H \thetab.
\end{equation} 

In order to evaluate $D\Theta$, we have $\Theta_0 = 0$ and 
\begin{multline}
D \Theta = -{1 \over \tau} \paa \partial_\alpha
 D\left[
  (c\cos - \Re( \widetilde{\mathsf W}^* \widetilde{T}))(\overline\gamma  + \gamma_1)\right]\\+ {A\over \tau}\paa D \left[\frac{\partial_\alpha Q}{4\Sigma}
+2 g \Sigma \left( \sin - Y\right)
+{\Sigma}\partial_\alpha (c \cos-\Re(\widetilde{\mathsf W}^* \widetilde{T}))^2
\right]
\end{multline}
Carrying out $D$, we have  
\begin{equation*}
\begin{split}
D \Theta =& -{1 \over \tau}\paa\partial_\alpha
 \left[
  (cD \cos - \Re( (D \widetilde{\mathsf W}^*) \widetilde{T}_0)  -  \Re(  \widetilde{\mathsf W}_0^* (D\widetilde{T}))  
  )\overline\gamma 
  +  (c\cos_0 - \Re( \widetilde{\mathsf W}_0^* \widetilde{T}_0))\gammab\right]
  \\&+ {A\over \tau}\paa  \left[\frac{ \Sigma_0 \partial_\alpha DQ - \partial_\alpha (Q_0) D\Sigma }{4\Sigma^2_0} 
+2 g \Sigma_0 \left( D \sin - D Y\right)+ 2 g D \Sigma \left( \sin_0 - Y_0 \right) \right]
\\&
+ {A\over \tau}\paa \left[
(D\Sigma)\partial_\alpha (c \cos_0-\Re(\widetilde{\mathsf W}_0^* \widetilde{T}_0))^2  \right] \\&+
{A\over \tau}\paa  \left[2 \Sigma_0\partial_\alpha[ (c \cos_0-\Re(\widetilde{\mathsf W}_0^* \widetilde{T}_0))
(c D\cos-\Re((D\widetilde{\mathsf W}^*) \widetilde{T}_0)-\Re(\widetilde{\mathsf W}_0^* (D\widetilde{T}))) 
]
\right]
\end{split}
\end{equation*}
				When we use the expressions at the start of this section, this quantity reduces to 
\begin{multline}
D \Theta = -{1 \over \tau}\paa\partial_\alpha
 \left[
  - \overline\gamma  \Re( D \widetilde{\mathsf W}^* ) 
  +  c\gammab\right]
  \\+ {A\over \tau}\paa \left[\frac{ \overline\gamma  \partial_\alpha\gammab }{2(M/2\pi)} 
+
{g M \over \pi} P\thetab
 \right]
+ {A\over \tau}\paa \left[
 {M \over \pi} \partial_\alpha[ -c 
\Re(D\widetilde{\mathsf W}^* )
]
\right], 
\end{multline}
where 
$$
P \thetab := \thetab-{1 \over 2 \pi}\int_0^{2\pi} \thetab (a) da.
$$
				Using \eqref{DW} in this expression, we get 
\begin{equation}   \label{DTheta} 
D \Theta =-{\pi \overline\gamma  \over M} \left( {\overline\gamma  \over \tau} 
-{ c A M \over \pi \tau}
\right)
\paa \partial_\alpha
H \thetab
+{A g M \over \pi \tau} \paa P \thetab
  + \left( {A \overline\gamma  \pi \over \tau M} -{ c \over \tau}\right)\paa \partial_\alpha \gammab.
\end{equation}  

Lastly, to compute compute $D\Gamma $, we have $\Gamma_{0} = 0$ and 
by \eqref{WN is not smooth} and \eqref{Gamma}, 
$$
\Gamma =
2 H \left( \left \vert \partial_\alpha \widetilde{Z}[\Theta] \right \vert \Re\left(
\widetilde{W}^*\left[\Theta,\overline\gamma  + \gamma_1\right] \widetilde{N}\left[ \Theta \right] \right)
-{1 \over 2 } H \gamma_1\right)  +2  c H\left( \left \vert \partial_\alpha \widetilde{Z}[\Theta] \right \vert \sin(\Theta)
\right)
$$
Differentiating, we get 
\begin{equation}
\begin{split}
D \Gamma  = &
2 H   
\left\{  D \left \vert \partial_\alpha \widetilde{Z} \right \vert \circ D\Theta\  \Re\left(
\widetilde{\mathsf W}^*_0 \widetilde{N}_0 \right) \right. 
 +\left \vert \partial_\alpha \widetilde{Z}_0 \right \vert \Re\left(
D \left(\widetilde{W}[\Theta,\overline\gamma +\gamma_1] \right)\widetilde{N}_0\right)   \\ &\left. 
 +\left \vert \partial_\alpha \widetilde{Z}_0\right \vert \Re\left(
\widetilde{\mathsf W}^*_0\  D \widetilde{N} \circ D \Theta \right) 
-{1 \over 2 } H \gammab\right\}    
+2  c H\left\{
 D \left \vert \partial_\alpha \widetilde{Z} \right \vert \circ D\Theta\  \sin_0 +
 \left \vert \partial_\alpha \widetilde{Z}_0 \right \vert D\sin \circ D \Theta   \right \}    
\\
= & \gammab+
{M \over \pi}H  \Re \left( i D \left(\widetilde{\mathsf W}[\Theta,\overline\gamma +\gamma_1] \right)
\right)  +{c M \over \pi} H D \Theta
\end{split}
\end{equation}
because $D|\partial_\alpha\widetilde Z| = 0$   and     $\widetilde W_0^*=0$.  
By \eqref{DW},  
we have 
$\ds
D \left(\widetilde{ W}[\Theta,\overline\gamma +\gamma_1] \right)
= {\pi \over iM} H \gammab - {\pi \overline\gamma  \over M} H D\Theta $, 
so that 
\begin{equation}  \label{DGamma}  
 D \Gamma  = \gammab - \Re (\gammab  -  i\overline{\gamma} D\Theta)  +  \frac{cM}{\pi} HD\Theta  =  
 {cM \over \pi} H D\Theta. 
 \end{equation}
 
Combining 
\eqref{DW}, \eqref{DTheta} and \eqref{DGamma}, we see that the linearization of the mapping 
$(\theta,\gamma_1) \to (\theta-\Theta,\gamma_1 -\Gamma )$ at $(0,0;c)$ is  
\begin{equation}\label{Lc}\begin{split}
L _c
\left[
\begin{array}{c} \thetab \\ \gammab 
\end{array}
\right]& :=
\left[
\begin{array}{c}
\thetab - D\Theta\\
\gammab-D\Gamma 
\end{array}
\right]\\&
= \left[
\begin{array}{cc}
1 + {\pi \overline\gamma  \over M} \left( {\overline\gamma  \over \tau} 
-{ c A M \over \pi \tau}
\right)
\paa\partial_\alpha
H 
- {AgM \over \pi \tau} \paa P
& 
  - \left( {A \overline\gamma  \pi \over \tau M} -{ c \over \tau}\right)\paa \partial_\alpha \\
{\overline\gamma  c } \left( {\overline\gamma  \over \tau} 
-{ c A M \over \pi \tau}
\right)
H \paa \partial_\alpha
H - {c M^2Ag \over  \pi^2  \tau} H\paa P
 &   1 
  -c \left( {A \overline\gamma   \over \tau } -{ cM \over \pi \tau}\right)H\paa \partial_\alpha 
\end{array}
\right]\left[
\begin{array}{c} \thetab \\ \gammab 
\end{array}
\right]
\end{split}
\end{equation}
Our goal is to find those values of $c$ such that $L_c$ has a one-dimensional nullspace.  
Because  we are working with $2\pi$-periodic functions, we may expand them as 
$$
\thetab(\alpha) = \sum_{k  \in \Z} \widehat{\thetab}(k) e^{ik \alpha}\quad \text{and} \quad \gammab(\alpha) = \sum_{k  \in \Z'} \widehat{\gammab}(k) e^{ik \alpha}.
$$
We have denoted $\Z' : = \Z / \left\{ 0 \right\}$.  
We can eliminate the $k = 0$ coefficient for $\gammab$ since it has zero mean.  
The operators $\partial_\alpha$, $H$, $P$ and $\paa$ can be represented on the Fourier side in the usual way:
$$
\widehat{\partial_\alpha \mu}(k) = ik \widehat{\mu}(k),\quad \widehat{H \mu}(k) = -i\sgn(k) \widehat{\mu}(k)
$$
$$
\quad \widehat{P \mu}(k) = (1 - \delta_0(k))\widehat{\mu}(k)  
\quad \text{and} \quad \widehat{\paa \mu} (k) = -{1 \over k^2} \widehat{\mu}(k), 
$$
where $\delta_0(k) = 1$ for $k = 0$ and is otherwise zero.  
Thus  $L_c$ is represented on the frequency side as the Fourier multiplier 
\begin{equation}\begin{split}
\widehat{L _c
\left[
\begin{array}{c} \thetab \\ \gammab 
\end{array}
\right]} (k) & =
\widehat{L _c}(k)
\left[
\begin{array}{c}\widehat{ \thetab }(k)\\ \widehat{\gammab}(k)
\end{array}
\right] \end{split}
\end{equation}
where, for $k \ne 0$
\begin{equation}
\begin{split}
\widehat{L _c}(k)
 &
= \left[
\begin{array}{cc}
1 - {\pi \overline\gamma  \over M} \left( {\overline\gamma  \over \tau} 
-{ c A M \over \pi \tau}
\right)
|k|^{-1} + {A gM \over \pi \tau} k^{-2}
& 
 i\left( { A \overline\gamma  \pi \over \tau M} -{ c \over \tau}\right)k^{-1} \\
i{\overline\gamma  c } \left( {\overline\gamma  \over \tau} 
-{ c A M \over \pi \tau}
\right)
 k^{-1} -i {c M^2 A g  \over \pi^2 \tau}  \sgn(k) k^{-2}
 &   1 
  +c \left( {A \overline\gamma   \over \tau } -{ cM \over \pi \tau}\right)|k|^{-1}
\end{array}
\right]\end{split}
\end{equation}
and
$  \widehat{L_c}(0)  $   is the identity. 

Now we can easily compute the point spectrum of $L_c$.   
In particular, $\lambda \in \C$ is an eigenvalue of $L_c$ if and only if
$\lambda$ is an eigenvalue of $\widehat{L_c}(k)$ for some integer $k$. 
For any nonzero integer $k$, 
an elementary computation shows that the two eigenvalues 
of $\widehat{L_c}(k)$  are $1$ and
$$
\lambda_k(c) := 1+ \frac{2\overline\gamma  c A M \pi -M^2c^2 - \overline\gamma ^2 \pi^2}{M\pi \tau} \left \vert k \right \vert^{-1} +\frac{g A M}{\pi \tau} \left \vert k \right \vert^{-2}
$$
Since this expression is even in $k$, every eigenvalue of $L_c$ has even multiplicity. 
So any crossing number for $L_c$ will  necessarily be even. 
However, the eigenvector of $\widehat{L_c}(k)$ associated to this eigenvalue is
$\ds
\left[ \begin{array}{c}
\sgn(k) {i \pi /c M} \\
1
\end{array}
\right]
$
which in turn implies that 
$$
\left[ \begin{array}{c}
 {i \pi /c M} \\
1
\end{array}
\right] e^{i k \alpha}
\quad \text{and} \quad
\left[ \begin{array}{c}
-{i \pi /c M} \\
1
\end{array}
\right] e^{-i k \alpha}
$$
are the corresponding eigenfunctions for $L_c$ with eigenvalue $\lambda_k(c)$.   
Of course, we can break these up into real an imaginary parts to get an equivalent basis for the eigenspace, 
namely, 
$$
\left[ \begin{array}{c}
 -{(\pi /c M)} \sin(k \alpha) \\
\cos(k \alpha)
\end{array}
\right]
\quad \text{and} \quad
\left[ \begin{array}{c}
{ (\pi /c M)} \cos(k \alpha) \\
\sin(k \alpha)
\end{array}
\right]. 
$$
Only the first of these satisfies the symmetry properties ($\theta$ odd, $\gamma_1$ even) 
required by our function space \eqref{solution set}. 
Thus when we take account of the symmetry, the dimension of the eigenspace equals one.  
We summarize the spectral analysis as follows.  
\begin{proposition} {\bf (Spectrum of $L_c$)}
Let $L_{c}$ be the linearization of the mapping  
$(\theta,\gamma,c) \in \U \to (\theta-\Theta,\gamma-\Gamma) \in \H^2_\per \times \H^2_{\per,0}$ at $(0,0;c)$.
The spectrum of $L_{c}$ consists of $1$ and the point spectrum
$$
\sigma_{\text{pt}}:=\left\{ \lambda_k(c): = 1+ \frac{2\overline\gamma  c A M \pi -M^2c^2 - \overline\gamma ^2 \pi^2}{M\pi \tau} k^{-1} +\frac{g A M}{\pi \tau} k^{-2}
: k \in \N
\right\}. 
$$
Moreover, each eigenvalue $\lambda$ has geometric and algebraic multiplicity $N_\lambda(c)$ where $$
N_\lambda(c):= \left \vert 
\left\{ k \in \N \text{ such that } \lambda_k (c)= \lambda \right\}
\right \vert.
$$
The eigenspace for $\lambda$ is  
$$
E_\lambda(c):=\text{span} \left\{
\left[ \begin{array}{c}
 -{(\pi /c M)} \sin(k \alpha) \\
\cos(k \alpha)
\end{array}
\right]:  \text{$k \in \N$ such that }\lambda_k(c) = \lambda
\right\}.
$$
\end{proposition}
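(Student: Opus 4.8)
The plan is to exploit the fact that, by formula~\eqref{Lc}, the operator $L_c$ is assembled entirely from the Fourier multipliers $\partial_\alpha$, $H$, $P$ and $\paa$ together with the constants $c,\overline\gamma,A,g,M,\tau$; hence $L_c$ commutes with translations and is block-diagonal with respect to the exponential basis $\{e^{ik\alpha}\}_{k\in\Z}$. First I would record the general principle that a bounded Fourier-multiplier operator on $\H^1_\per\times\H^1_{\per,0}$ has spectrum equal to the closure of the union, over $k$, of the spectra of its $2\times2$ symbol matrices $\widehat{L_c}(k)$; below one sees that these symbol spectra are $\{1\}\cup\{\lambda_k(c)\}$ with $1$ the only accumulation point, so there is no continuous or residual spectrum and every point of the spectrum other than $1$ is an eigenvalue with the eigenfunctions displayed just before the statement. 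Next I would use that $L_c$ preserves the symmetry classes (this was asserted in Theorem~\ref{TWE5}): the subspace with $\theta$ odd and $\gamma_1$ even of mean zero is invariant, and restricted to it $L_c$ decomposes into a real $2\times2$ block $\mathcal L_k$ acting on $\text{span}\{(\sin k\alpha,0),(0,\cos k\alpha)\}$ for each $k\in\N$.

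Then I would carry out the symbol computation. Using $\widehat{\partial_\alpha\mu}(k)=ik\widehat\mu(k)$, $\widehat{H\mu}(k)=-i\sgn(k)\widehat\mu(k)$, $\widehat{P\mu}(k)=(1-\delta_0(k))\widehat\mu(k)$ and $\widehat{\paa\mu}(k)=-k^{-2}\widehat\mu(k)$ in~\eqref{Lc} (equivalently in~\eqref{DW}, \eqref{DTheta}, \eqref{DGamma}), a short trace-and-determinant calculation shows that the characteristic polynomial of $\widehat{L_c}(k)$ is $(\lambda-1)\bigl(\lambda-\lambda_k(c)\bigr)$ with $\lambda_k(c)$ exactly the quantity in the statement; since $\lambda_k(c)$ depends on $k$ only through $|k|$, the symbols at $k$ and at $-k$ share their eigenvalues, and pairing these two modes on the symmetric subspace yields $\mathcal L_k$ with eigenvalues $1$ and $\lambda_k(c)$ (the complementary eigenvalue being forced to $1$ by comparison with the full four-dimensional block at $\pm k$) and $\lambda_k(c)$-eigenvector proportional to $\bigl(-(\pi/cM)\sin k\alpha,\ \cos k\alpha\bigr)$, precisely as computed in the paragraph preceding the statement. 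Taking the union over $k\in\N$ gives $\sigma_{\text{pt}}$; grouping the indices $k$ that produce a common value of $\lambda_k(c)$ produces the eigenspace $E_\lambda(c)$ and shows its dimension equals $N_\lambda(c)$, and the same grouping yields the algebraic multiplicity once the last point is settled.

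Finally I would reconcile the geometric and algebraic multiplicities. Wherever $\lambda_k(c)\neq1$, the block $\mathcal L_k$ has two distinct eigenvalues, hence is diagonalizable and contributes equally to both multiplicities; summing over the relevant $k$ then gives algebraic multiplicity $=$ geometric multiplicity $=N_\lambda(c)$ for every $\lambda\neq1$. The one place that needs care is a parameter value at which $\lambda_k(c)=1$ for some $k$: there the block has $1$ as a double eigenvalue, and one must check from the explicit symbol that it is still semisimple (not a genuine Jordan block) before the count goes through — bearing in mind that $1$ is in any case always in the spectrum as the limit of the $\lambda_k(c)$. I expect this semisimplicity and multiplicity bookkeeping, carried out while keeping the odd/even symmetry reduction in play, to be the main obstacle; by contrast the algebra that produces the closed form for $\lambda_k(c)$ from~\eqref{Lc} is completely routine. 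For the application in the next subsection only the eigenvalue $\lambda=0$ matters, and since $0\neq1$ it lies in the clean diagonalizable regime, so $N_0(c)=\lvert\{k\in\N:\lambda_k(c)=0\}\rvert$ without caveat.
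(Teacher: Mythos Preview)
Your proposal is correct and follows essentially the same approach as the paper: the proposition in the paper is not given a separate proof but is explicitly announced as a summary of the preceding calculation, which computes $D\Theta$, $D\Gamma$, assembles $L_c$ as in \eqref{Lc}, passes to the Fourier side, reads off the eigenvalues $1$ and $\lambda_k(c)$ of each $2\times2$ symbol $\widehat{L_c}(k)$, and then uses the odd/even symmetry to halve the multiplicity. Your extra care about semisimplicity when $\lambda_k(c)=1$ and about the general Fourier-multiplier spectrum principle goes slightly beyond what the paper spells out, but the route is the same.
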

\begin{cor}\label{crossing}
Fix $A$, $g$, $\overline\gamma  \in \R$ and $\tau,M>0$.      Let
$$
\mathcal{K}:=\left\{ k \in \Z :
\pi^2 \overline\gamma ^2 A^2 k ^2 + \pi \tau k^3 M - \pi^2 k^2 \overline\gamma ^2 + k A g M^2 > 0 \text{ and }
A g M/\pi \tau k \notin \N/\left\{ k \right\}  \right\}.  $$
For $k \in \N$, let 
$$
c_\pm(k):={\pi \overline\gamma  A \over M} \pm {1 \over kM} \sqrt{\pi^2 \overline\gamma ^2 A^2 k ^2 + \pi \tau k^3 M - \pi^2 k^2 \overline\gamma ^2 + k A g M^2}.  $$
Then  $\left \vert \mathcal{K} \right \vert = \infty$ 
and  $L_{c}$ has  crossing number equal to one at a real value $c = c_\pm(k)$ 
if and only if $k \in \mathcal{K}$.
If $A=0$, then $\mathcal K=\Z$.  
\end{cor}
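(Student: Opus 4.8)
The plan is to extract everything from the explicit nonconstant eigenvalue branches
\[
\lambda_k(c)=1+\frac{2\overline\gamma cAM\pi-M^2c^2-\overline\gamma^2\pi^2}{M\pi\tau}\,|k|^{-1}+\frac{gAM}{\pi\tau}\,|k|^{-2},\qquad k\in\N,
\]
furnished by the preceding Proposition, using that on the symmetric function space the eigenvalues of $L_c$ are $1$ (which never vanishes) and the $\lambda_k(c)$, with each index $k$ contributing a one–dimensional eigenmode; consequently the multiplicity of the eigenvalue $0$ of $L_{c_0}$ is the number of $k\in\N$ with $\lambda_k(c_0)=0$. Since $c\mapsto\lambda_k(c)$ is a quadratic polynomial, the crossing number of $L_c$ at $c_0$ equals one exactly when precisely one index $k$ has $\lambda_k(c_0)=0$ and that zero is of odd order; for a quadratic, an odd–order zero is automatically simple, so the crossing is then transversal.

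First I would clear denominators: for $k>0$, $\lambda_k(c)=0$ is equivalent to the quadratic in $c$
\[
M^2k\,c^2-2\overline\gamma AM\pi k\,c+\bigl(\pi^2\overline\gamma^2k-AgM^2-M\pi\tau k^2\bigr)=0,
\]
whose discriminant works out to $4M^2P(k)$ with $P(k):=\pi^2\overline\gamma^2A^2k^2+\pi\tau k^3M-\pi^2k^2\overline\gamma^2+kAgM^2$, precisely the polynomial appearing in the definition of $\mathcal K$. Hence $\lambda_k(\cdot)$ has two distinct real zeros iff $P(k)>0$, and in that case they are exactly $c_\pm(k)$; if $P(k)=0$ the unique real zero $c=\pi\overline\gamma A/M$ is double (no sign change), and if $P(k)<0$ there is no real zero. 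This pins down the first defining inequality of $\mathcal K$ and, when $P(k)>0$, gives transversality of the $\lambda_k$–crossing at each $c_\pm(k)$.

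Next I would determine when a second branch also vanishes at that value of $c$. Suppose $\lambda_k(c_0)=\lambda_j(c_0)=0$ with $j\neq k$ in $\N$ and set $B:=2\overline\gamma AM\pi c_0-M^2c_0^2-\pi^2\overline\gamma^2$; then $M\pi\tau\ell^2+B\ell+AgM^2=0$ for $\ell=j,k$. Subtracting gives $B=-M\pi\tau(k+j)$, and substituting back yields $jk=AgM/(\pi\tau)$, i.e.\ $j=AgM/(\pi\tau k)$. Conversely, if $AgM/(\pi\tau k)$ happens to be an integer $j\neq k$, this same $B$ and $j$ satisfy the quadratic, so $\lambda_j(c_\pm(k))=0$; and since $c_\pm(k)\neq\pi\overline\gamma A/M$ when $P(k)>0$, this zero of $\lambda_j$ is also simple. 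The identical two–equation argument rules out three distinct indices sharing a zero. Combining: when $P(k)>0$ the crossing number at $c_\pm(k)$ equals one iff there is no such $j$, i.e.\ iff $AgM/(\pi\tau k)\notin\N\setminus\{k\}$, the second defining condition of $\mathcal K$. This establishes the stated equivalence.

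Finally, $|\mathcal K|=\infty$ follows because for all large $k\in\N$ the leading term $\pi\tau Mk^3$ forces $P(k)>0$, while $AgM/(\pi\tau k)\to0$ eventually has modulus below $1$ and hence lies outside $\N\setminus\{k\}$; so $\mathcal K$ contains a cofinite subset of $\N$. The case $A=0$ is handled by direct substitution into the two defining conditions (the second becoming vacuous, the first collapsing to $\pi k^2(\tau Mk-\pi\overline\gamma^2)>0$). I expect the only mildly delicate point to be the bookkeeping that an odd–order zero of a branch $\lambda_k$ is forced to be simple, hence transversal, and that the ever–present eigenvalue $1$ of $L_c$ never interferes; the rest is elementary algebra on the quadratic $\lambda_k(\cdot)$.
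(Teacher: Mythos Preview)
Your proof is correct and follows essentially the same approach as the paper's: both solve the quadratic $\lambda_k(c)=0$ to identify $c_\pm(k)$ and the positivity condition on $P(k)$, determine the possible second vanishing index via the product-of-roots relation $jk=AgM/(\pi\tau)$, verify transversality from $P(k)>0$, and deduce $|\mathcal K|=\infty$ from the asymptotics of $P(k)$ and $AgM/(\pi\tau k)$. Your treatment is slightly more explicit in places (e.g., the discriminant computation and the handling of the double-root case $P(k)=0$), but the underlying argument is the same.
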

\begin{proof}
Fix $k \in \N$. 
We are looking for the real values of $c\in\R$ for which $\lambda_k=0$.  
There are at most two roots.   
A routine calculation shows that $\lambda_k(c) = 0$ if and only if $c = c_\pm(k)$.   
The first condition in the definition of $\mathcal{K}$ shows that $c_\pm(k)$ is a real number. 
Thus  $\lambda =0$ is an eigenvalue.
We must compute its crossing number and the first step is to calculate its multiplicity, denoted $N_0(c_\pm(k))$. 
Thus, given $c=c_\pm(k)$,  we must find all $l \in \N$
such that $\lambda_l(c_\pm(k)) = 0.$ Clearly $l = k$ works. 
A small amount of algebra shows that the only other possible solution of $\lambda_l(c_\pm(k)) = 0$ 
is $$l = l_k:= AgM/\pi\tau k.$$

Another calculation shows that
$$
\lambda_k(c_\pm(k) + \ep) = \mp {2 \sqrt{\pi^2 \overline\gamma ^2 A^2 k ^2 + \pi \tau k^3 M - \pi^2 k^2 \overline\gamma ^2 + k A g M^2} \over k^2 \pi \tau} \ep - {M \over \tau k \pi} \ep^2
$$
and
$$
\lambda_{l_k}(c_\pm(k)+\ep) =  \mp {2 \sqrt{\pi^2 \overline\gamma ^2 A^2 k ^2 + \pi \tau k^3 M - \pi^2 k^2 \overline\gamma ^2 + k A g M^2} \over A g M} \ep - {k \over A g} \ep^2.$$

Now assume that $k \in \mathcal{K}$. The second condition in the definition of $\mathcal{K}$ shows that $l_k \notin \N/\left\{k\right\}$  and thus $N_0(c_\pm(k)) = 1$. The first condition guarantees that
coefficient of $\ep$ in the expansion of $\lambda_k(c_\pm(k) + \ep)$ is non-zero. Thus we see $\lambda_\pm(c)$ changes sign as $c$ passes through $c_\pm(k)$: the crossing number is equal to one and thus is odd. Thus we have shown the ``if" direction in the corollary.
The ``only if" direction follows by showing that the crossing number is either two or zero when one of the conditions is not met. The details are simple so we omit them.

To show that $\left \vert \mathcal{K} \right \vert = \infty$, observe that, since  $\tau, M > 0$, we have
$$\ds
\lim_{k \to \infty} \left( \pi^2 \overline\gamma ^2 A^2 k ^2 + \pi \tau k^3 M - \pi^2 k^2 \overline\gamma ^2 + k A g M^2 \right) = \infty.
$$
Therefore the first condition in the definition of $\mathcal{K}$ is met for all $k$ sufficiently large. Likewise, no matter the choices of the parameters
$A,g,M$ and $\tau$, $\ds \lim_{k \to \infty} AgM/\pi \tau k =0 $ and the second
condition holds for $k$ sufficiently large. Thus $\left \vert \mathcal{K} \right \vert$ contains all $k > k_0$ for some finite $k_0 \in \N$.
 
\end{proof}

\subsection {Application of the abstract theorem}  
An appeal to Theorem \ref{glbif} has the following consequence.  
\begin{theorem} {\bf (Global bifurcation)}  \label{technical version}
Let the surface tension $\tau>0$, period $M>0$, 
Atwood number $A\in\R$, and average vortex strength $\overline\gamma\in\R$ be given.   
Let $\U$,
 $\mathcal{K}$ and $c_\pm(k)$ be defined as above.
Let $\mathcal{S}  \subset \U$ be the closure (in $\H^1_\per \times \H^1_{\per,0} \times\R$) of the set of all solutions of \eqref{TW v5}
for which either $\theta \ne 0$ or $\gamma_1 \ne 0$.
Given $k \in \mathcal{K}$,  
let $\mathcal{C}_\pm(k)$ be the connected component of $\mathcal{S}$ which contains $(0,0;c_\pm(k))$.

Then
\begin{enumerate}[(I)]
\item $either\ \mathcal{C}_\pm(k)$ is unbounded;

\item $or\ \mathcal{C}_\pm(k) = \mathcal{C}_+(l)$ or $\mathcal{C}_\pm(k) = \mathcal{C}_-(l)$
 for some $l \in \mathcal{K}$ with $l \ne k$; 
 
 \item $or\ \mathcal{C}_\pm(k)$ contains a point on the boundary of $\U$.
\end{enumerate}
\end{theorem}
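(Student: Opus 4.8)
The plan is to verify that the abstract global bifurcation theorem, Theorem~\ref{glbif}, applies to our situation with the identifications $X = \H^1_{\per,\text{odd}} \times \H^1_{\per,0,\text{even}}$, $U = \U_{b,h}$ for fixed $b,h>0$, $\xi = (\theta,\gamma_1)$, $c_0 = c_\pm(k)$, and $F(\xi,c) = \xi - (\Theta(\theta,\gamma_1;c),\Gamma(\theta,\gamma_1;c))$, and then to deduce the stated trichotomy by taking the union of the conclusions over all $b,h>0$. First I would check hypothesis (b): the map $(\xi,c) \mapsto F(\xi,c) - \xi = -(\Theta,\Gamma)$ is compact from $X \times \R$ into $X$. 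This is exactly the content of Theorem~\ref{TWE5}, which says $(\Theta,\Gamma)$ maps $\U_{b,h}$ compactly into $\H^2_{\per,\text{odd}} \times \H^2_{\per,0,\text{even}}$, and this space embeds compactly into $X$. Hypothesis (a), continuity of $c \mapsto D_\xi F(0,\cdot)$ in $L(X,X)$, follows from the explicit formula \eqref{Lc} for $L_c$: each Fourier multiplier coefficient $\widehat{L_c}(k)$ depends smoothly (in particular continuously) on $c$, uniformly in $k$, so $c \mapsto L_c$ is continuous into $L(X,X)$; and $F(0,c) = 0$ for all $c$ is immediate since $\Theta_0 = \Gamma_0 = 0$. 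Hypothesis (c) regarding the odd crossing number is precisely Corollary~\ref{crossing}: for $k \in \mathcal{K}$, the linearization $L_c$ has crossing number one (hence odd) at $c = c_\pm(k)$.

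With the hypotheses verified, Theorem~\ref{glbif} yields, for each fixed $b,h>0$, that the connected component $\mathcal{C}^{b,h}_\pm(k)$ of the closure of nontrivial solutions in $\U_{b,h}$ containing $(0,0;c_\pm(k))$ is either (i) unbounded in $\H^1_\per \times \H^1_{\per,0} \times \R$, or (ii) contains a second bifurcation point $(0,0;c_1)$ with $c_1 \ne c_\pm(k)$, or (iii) meets the boundary of $\U_{b,h}$. For the final statement, I would then pass to the union $\U = \U_{0,0} = \bigcup_{b,h>0} \U_{b,h}$ and let $\mathcal{C}_\pm(k)$ be the component of $\mathcal{S}$ in $\U$ through $(0,0;c_\pm(k))$. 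Alternative (ii) must be re-expressed: by the spectral analysis, the only candidate secondary bifurcation points are the $(0,0;c_1)$ with $c_1 = c_\pm(l)$ for some $l \in \mathcal{K}$, which gives outcome (II) — one must note that such a component coincides with $\mathcal{C}_+(l)$ or $\mathcal{C}_-(l)$ since both contain that common point. Alternative (i) gives (I) directly. Alternative (iii), together with the fact that a component staying in a fixed $\U_{b,h}$ for all $b,h>0$ but meeting $\partial\U_{b,h}$ for each such pair must have its closure meet $\partial\U$, yields (III).

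The main obstacle I anticipate is the careful handling of the two-parameter family of open sets $\U_{b,h}$ and the passage to their union. Theorem~\ref{glbif} applies on each fixed $\U_{b,h}$, but the conclusions need to be combined coherently: a component of $\mathcal{S}$ in $\U$ need not be a component of $\mathcal{S} \cap \U_{b,h}$ for any single $b,h$, so one must argue that if $\mathcal{C}_\pm(k)$ is bounded, does not meet $\partial\U$, and contains no secondary bifurcation point of the form $(0,0;c_\pm(l))$, then for small enough $b$ and $h$ it is contained in $\U_{b,h}$ and is moreover a bounded component of the solution set there that avoids all three alternatives — a contradiction. This requires knowing that $\mathcal{C}_\pm(k)$, being bounded and bounded away from $\partial\U$, has compact closure inside some $\U_{b,h}$; here I would use that boundedness in $\H^1 \times \H^1 \times \R$ plus the extra derivative gained by $(\Theta,\Gamma)$ (Theorem~\ref{TWE5}) forces compactness, and that staying away from $\partial\U$ means uniform lower bounds on $\overline{\cos\theta}$ and on the chord-arc quantities, i.e. membership in $\U_{b,h}$ for definite $b,h$. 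Everything else — verifying (a), (b), (c) — is a direct citation of results already established in the excerpt.
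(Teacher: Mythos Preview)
Your proposal is correct and follows essentially the same route as the paper: apply Theorem~\ref{glbif} on each $\U_{b,h}$ using the compactness from Theorem~\ref{TWE5} and the odd crossing number from Corollary~\ref{crossing}, then pass to the union $\U = \bigcup_{b,h>0}\U_{b,h}$ by a topological argument. In fact you supply more detail than the paper does---the paper's proof dismisses the passage from $\U_{b,h}$ to $\U$ as ``an easy topological argument'' and does not spell out why alternative~(ii) of Theorem~\ref{glbif} becomes alternative~(II), whereas you sketch both points.
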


\begin{proof}
We saw in Theorem \ref{TWE5} that the mapping $(\Theta,\Gamma)$ was compact on $\U_{b,h}$ for all $b,h>0$ and we saw that there are always choices of $c$ which result
in an odd crossing number in Corollary \ref{crossing}. Thus Theorem \ref{glbif} can be applied with outcomes which coincide with the outcomes $(I)$-$(III)$ 
in Theorem \ref{technical version}  except with the replacement of $\U$ with the $\U_{b,h}$ in $(III)$. Since $\U = \cup_{b,h>0} \U_{b,h}$ an easy topological argument 
gives $(III)$ as above.
\end{proof}

This general statement leads in turn to our main conclusion.  
\begin{proof} [Proof of Theorem \ref{main result}]
By Proposition \ref{TWE5}, a solution $(\theta,\gamma_1;c)$ of \eqref{TW v5} gives rise to 
symmetric periodic traveling wave solutions of the two dimensional gravity-capillary vortex sheet problem by
taking $z(\alpha,t)=ct + \widetilde{Z}[\theta](\alpha)$ and $\gamma = \overline\gamma  + \gamma_1.$ (Note that, as in the proof of Proposition \ref{TWE2}, 
this implies that $N=\widetilde{N}[\theta]$ and $T=\widetilde{T}[\theta].$ We will use the two equivalent notations interchangeably in what follows.)
Note that in the statement of Theorem \ref{main result} it is stated that the traveling wave solutions are smooth, whereas the solutions given in Theorem \ref{technical version}
are stated to merely belong to $\H^1_\per \times \H^1_{\per,0} $.   
However, the maps $\Theta$ and $\Gamma $ in \eqref{TW v5} are smoothing. 
Therefore a simple bootstrap argument shows that $\theta$ and $\gamma_1$ 
are in $\H^s_\per$ for any $s$ and thus in $C^\infty$.   The corresponding traveling waves
are likewise smooth; the details are routine and omitted.

Each of the outcomes (a)-(f) in Theorem \ref{main result} corresponds to one of 
the alternatives (I)-(III) of Theorem \ref{technical version}.   Fix $k \in \mathcal{K}$.  
It is straightforward to see that 
 alternative (II) in Theorem \ref{technical version} is interpreted as outcome (e) in Theorem \ref{main result}.

Now consider alternative (III).   
If $(\theta,\gamma_1;c)\in \mathcal{C}_\pm(k)$ is on the boundary of $\U$, 
then inspection of the definition of $\U$ shows that 
\begin{equation} \label{altIII}
\text{either }  \ \overline{\cos\theta} = 0\quad 
\text{or} \quad\widetilde{Z}[\theta] = \widetilde{Z}[\Theta] \notin \CA^3_0.
 \end{equation}  
The reconstruction of the curve $S (t)$ from $\theta$ via $\widetilde{Z}$ 
(recalling that $\overline{\sin\theta} =0$ for solutions) shows that
the length of $S (t)$ per period is given by
$$
L[\theta]:=\int_0^{2\pi} \left \vert \partial_\alpha \widetilde{Z}[\theta](a)\right \vert da 
 = {M \over \overline{\cos\theta}}.  
$$
In case  $\overline{\cos\theta} = 0$,  
the length of the curve reconstructed from $(\theta,\gamma_1;c)$ is formally infinite.  
Since $(\theta,\gamma_1;c)$ is in the closure of the set of nontrivial traveling wave solutions, 
the more precise statement is that 
there is sequence of solutions whose lengths diverge, which is outcome (a). 

Now suppose we have the other alternative in \eqref{altIII}, 
namely, that  $\widetilde{Z}[\theta] \notin \CA_0^3$.  
Since $h=0$ in this space, it means that 
$$
\inf_{\alpha,\alpha' \in [0,2\pi] } \zeta = 0, \quad \text{ where } \zeta(\alpha,\alpha') := 
\LV{ \widetilde{Z}[\theta](\alpha') - \widetilde{Z}[\theta](\alpha)  \over \alpha' - \alpha  }  \RV.
$$
Moreover, 
\be                 \label{not equal}  
\lim_{\alpha' \to \alpha} \zeta(\alpha,\alpha') 
= \LV \partial_\alpha \widetilde{Z}[\theta](\alpha) \RV  
= \frac M{2\pi\overline{cos\theta} }    
= {L[\theta]\over 2\pi} \ge 1.
\ee
But clearly 
${(\widetilde{Z}[\theta](\alpha') - \widetilde{Z}[\theta](\alpha) ) / (\alpha' - \alpha)  }$ 
is a continuous function of $(\alpha,\alpha')$ for  $\alpha \ne \alpha'$.  
Thus its infimum, 
which vanishes,  is attained at some pair of values $(\alpha_\star,\alpha'_\star)$
where $\alpha_\star \ne \alpha'_\star$.    
Hence $\zeta(\alpha_*,\alpha'_*)=0$, 
which in turn implies that 
$$
 \LV \widetilde{Z}[\theta](\alpha'_\star) - \widetilde{Z}[\theta](\alpha_\star)  \RV = 0. $$
This means that the curve reconstructed from $\theta$ intersects itself, outcome~(d).

Now consider alternative (I).   
Then $\mathcal{C}_\pm(k)$ contains a sequence of solutions $\left\{ (\theta_n,\gamma_{1,n};c_n)\right\}$
for which 
$$
\lim_{n \to \infty} \left( |c_n| + \|\theta_n\|_{\H^1_\per} + \| \gamma_{1,n} \|_{\H^1_\per} \right) = \infty, 
$$
so that at least one of the three terms on the left diverges. 

By combining (\ref{reconstruct condition}), (\ref{this is z}), and (\ref{win}), we see that $|\partial_{\alpha}\widetilde{Z}[\theta_{n}]|=\sigma_{n},$ and
the length of one period of the interface is thus proportional to $\sigma_{n}.$
If $\sigma_{n}\rightarrow\infty,$ then outcome (a) has occurred; thus, we may assume that
$\sigma_{n}$ remains bounded above independently of $n.$  Since the length of one period of the curve may not vanish (by periodicity),
we also see that $\sigma_n$ is bounded below (away from zero) independently of $n.$
Considering again (\ref{reconstruct condition}), we see that $\sigma_n$ being bounded above implies that $\overline{\cos\theta_n}$
is bounded away from zero.

Suppose first that $|c_n|\to\infty$ 
but $\|\theta_n\|_{\H^1_\per} + \| \gamma_{1,n} \|_{\H^1_\per}$ is bounded. 
We see from \eqref{wn eqn} that 
$$\|c_{n}\sin\theta_{n}\|_{H^{1}}=\|\Re(W^{*}_{n}N_{n})\|_{H^{1}}.$$
Since $W_{n}^{*}=B[\widetilde{Z}[\theta_{n}]]\gamma_{n},$
this implies
$$\|c_{n}\sin\theta_{n}\|_{H^{1}}=\|\Re(B[\widetilde{Z}[\theta_{n}]]\gamma_{n}\ N_{n})\|_{H^{1}}.$$
We then use (\ref{WN is not smooth}) to write this as
$$\|c_{n}\sin\theta_{n}\|_{H^{1}}=\left\|\frac{1}{2\sigma_n}H\gamma_n + \Re(K[\widetilde{Z}[\theta_n]]\gamma_n\ N_n)\right\|_{H^{1}}.$$
We have remarked above that $\frac{1}{\sigma_n}$ is bounded independently of $n,$ and we see that $H\gamma_n$ is uniformly bounded in $H^{1}$
since $\gamma_{n}$ is.
Applying Lemma \ref{hilbert plus smooth} gives an estimate for the operator $K,$ and we find the following:
$$\|c_{n}\sin\theta_n\|_{H^{1}}\leq C\|\gamma_{n}\|_{H^{1}}+C\exp\{C\|\widetilde{Z}[\theta_n]\|_{H^{1}}\}\|\gamma_{n}\|_{H^{1}}\|N_{n}\|_{H^{1}}.$$
Since $\theta_{n}$ is, by assumption, bounded in $H^{1},$ we see from (\ref{this is z}) and (\ref{tangent def 2}) that $\widetilde{Z}[\theta_n]$ and $N_n$ are as well.
We conclude that
$c_n \sin\theta_n$ is bounded in $\H^{1}_{\per},$ independently of $n.$

Therefore $\|\sin\theta_n\|_{\H^1_\per} \to 0,$ and by Sobolev embedding,
$\sin\theta_n\to 0$ uniformly.
This implies that $\theta_n$ converges to a multiple of $\pi;$ 
the uniform convergence and the continuity and oddness of $\theta_n$ make it is straightforward to see that this multiple must be zero. 
Note also, then, that $|\cos \theta_n| \to 1,$ uniformly.    
Continuing, we integrate \eqref{gamma**} once, finding that the quantity
\be \label{abc}  
(c_n\cos\theta_n - \Re( W^*_n T_n))\gamma_n 
-2A\left(\frac{1}{8}\frac{\gamma_n^{2}}{\sigma_{n}}+
g\sigma_{n} \int^\alpha \sin\theta_n\, d\alpha  +{\sigma_{n} \over 2} \{c_n \cos\theta_n -\Re(W^*_n T_n)\}^2  \right)  \ee 
is bounded in $L^2_{per}$.  

Recalling again that $W_{n}^{*}=B[\widetilde{Z}(\theta_{n})]\gamma_{n},$ we see that  
(\ref{WT is smooth}) gives a formula for $\Re(W_{n}^*T_{n}).$  
Similarly to our previous use of Lemma \ref{hilbert plus smooth}, we see that 
Lemma \ref{hilbert plus smooth} then implies that
$\Re(W_{n}^{*}T_{n})$ is bounded in $\H^{1}_{\per}.$  
If $A=0$, we then deduce that $c_n\gamma_n\cos\theta_n  $ is bounded in $L^2_{per}$ 
and therefore $\gamma_n \to0$   in $L^2_{per}$.  
Thus the average, which is a constant, must satisfy $\overline{\gamma} = 0$.   
If we have $\bar{\gamma}\neq 0,$ then this is a contradiction.  

Now assume that $A\ne 0$.  Dividing (\ref{abc}) by $c_{n}^{2},$ we see that all 
the terms then go to zero as $n\to\infty$ except 
$\frac{\sigma_{n}}{2}(\cos\theta_n)^2.$  This then implies that 
$\frac{\sigma_{n}}{2}(\cos\theta_n)^2$ goes to zero, which is a contradiction 
since $\cos\theta_n\to 1$ uniformly.  

If $\bar{\gamma}=0$ and $A=0,$ then we do not rule out $|c_{n}|\rightarrow\infty;$ this is
possibility (f) of the theorem.

If $\| \theta_n\|_{\H^1_\per}$ diverges, then either  $\theta_n$ or $\partial_\alpha \theta_n$ 
diverges in $L^2_\per$.
Since $\theta$ is the tangent angle to $S (t)$, the curvature is exactly 
$\kappa(\alpha) = \partial_\alpha \theta(\alpha)/\sigma.  $
Inspection of (\ref{reconstruct condition}) indicates that if $\sigma_{n}\rightarrow0,$ then $\overline{\cos\theta_{n}}\rightarrow\infty.$  This clearly cannot be the case, however, and thus $\sigma_{n}$
cannot go to zero.   
Recall that we assume that $\sigma_{n}$ is bounded above.
If it is $\partial_\alpha \theta_n$ that diverges, then 
we see that the curvature diverges, which is to say that we have outcome (b).
On the other hand, suppose that it is $\theta_n$ that diverges in $L^2_\per$.  
Since $\theta_n$ is odd and periodic, $\theta_n(0)=\theta_n(2 \pi) = 0$.
If the $L^2_\per$-norm of $\theta_n$ diverges then of course its $L^\infty$-norm also diverges 
and so does $\partial_\alpha \theta_n$. 
Which means that the curvature for the reconstructed interface is diverging. 
Thus  $\|\theta_n\|_{\H^1_\per}$ diverging implies~ outcome~(b).

If  $\| \gamma_{1,n}\|_{\H^1_\per}$ diverges, then either  
$\gamma_{1,n}$ or $\partial_\alpha \gamma_{1,n}$ diverges in $L^2_\per$. 
Suppose that it is the former. 
From Section \ref{eom}, the jump in the tangential velocity across the interface 
is related to $\gamma$ by \eqref{jump}.
By the reconstruction method and the length $L$ above, it implies that 
$ j(\alpha,t) =2 \pi (\overline\gamma  + \gamma_1(\alpha,t))/ L[\theta].$
If $L[\theta_n]$ remains bounded, then clearly the jump $j$ given above diverges in $L^2_\per$, 
which is outcome (c).       If $L[\theta_n]$ diverges, we have outcome (a).
Likewise, if $\partial_\alpha \gamma_{1,n}$ diverges in $L^2_\per$, then 
either the derivative of the jump diverges, outcome (c), or else the length diverges, outcome (a).
\end{proof}

\bibliographystyle{plain}
\bibliography{globalbif}{}

\end{document}